\theoremstyle{plain}
\newtheorem{theorem}{Theorem}[section]
\newtheorem{lemma}[theorem]{Lemma}
\newtheorem{proposition}[theorem]{Proposition}
\newtheorem{corollary}[theorem]{Corollary}
\newtheorem*{thm}{Theorem}
\theoremstyle{definition}
\newtheorem{definition}[theorem]{Definition}
\newtheorem{remark}[theorem]{Remark}
\newtheorem{rk-def}[theorem]{Remark-Definition}
\newtheorem{notation}[theorem]{Notation}
\newtheorem{example}[theorem]{Example}
\theoremstyle{remark}
\newtheorem*{ack}{Acknowledgement}
\numberwithin{equation}{section}
\def\Nash{\mathbf{Nash}}
\def\Grass{\mathbf{Grass}}
\def\Hilb{\mathbf{Hilb}}
\def\Jac{\mathrm{Jac}}
\def\JI{\mathcal{J}}
\def\Crit{\mathrm{Crit}}
\def\Fitt{\mathbf{Fitt}}
\def\div{\mathrm{div}}
\def\pr{\mathrm{pr}}
\def\Var{\mathrm{Var}}
\def\Mor{\mathrm{Mor}}
\def\Im{\mathrm{Im}}
\def\ord{\mathrm{ord}}
\def\Spec{\mathrm{Spec}}
\def\Id{\mathrm{Id}}
\def\sr{\mathrm{sr}}
\def\L{\mathbb{L}}
\title[Higher Jacobian ideals, contact equivalence and motivic zeta functions]{\bf Higher Jacobian ideals, contact equivalence and motivic zeta functions}
\author{Quy Thuong L\^e}
\address{University of Science, Vietnam National University, Hanoi
\newline 
\indent 334 Nguyen Trai street, Hanoi, Vietnam}
\address{Department of Mathematics, Graduate School of Science, Osaka University, Toyonaka
\newline\indent Osaka 560-0043, Japan}
\email{leqthuong@gmail.com}
\author{Takehiko Yasuda}
\address{Department of Mathematics, Graduate School of Science, Osaka University, Toyonaka
\newline\indent Osaka 560-0043, Japan}
\email{yasuda.takehiko.sci@osaka-u.ac.jp}
\thanks{The first author was partially funded by VinGroup and supported by Vingroup Innovation Foundation (VinIF) under the project code VINIF.2021.DA00030.
The second author
was supported by JSPS KAKENHI Grant Numbers JP21H04994
and JP23H01070.
}
\keywords{higher Nash blowups, higher Jacobian matrices, higher Jacobian ideals, singularity, contact equivalence, motivic zeta function, motivic nearby cycle}
\subjclass[2010]{13N05, 13N10, 14B05, 14E15, 14E18, 14J17, 32S25}
\begin{document}
\begin{abstract}
%
We show basic properties of higher Jacobian matrices and higher Jacobian ideals for  functions and apply it to obtain two results concerning singularities of functions. Firstly, we prove that a higher Nash blowup algebra is invariant under contact equivalences, which was recently conjectured by Hussain, Ma, Yau and Zuo. Secondly, we obtain an analogue of a result on motivic nearby cycles by Bussi, Joyce and Meinhardt.
\end{abstract}
\maketitle

\section{Introduction}


\subsection{}
Jacobian matrices and Jacobian ideals are fundamental objects in the study of singularities of varieties as well as singularities of morphisms. 
There are higher versions of these notions, higher Jacobian matrices and higher Jacobian ideals. 
Duarte introduced them for hypersurface varieties in \cite{Dua17}, motivated by the study of higher Nash blowups. The case of more general varieties was later treated in \cite{BD20,BJNB19}. The aim of the present article is to show basic properties of higher Jacobian matrices and ideals for functions/morphisms and apply them to show two results concerning singularities of functions, that is, a recent conjecture on higher Nash blowup algebras by Hussain, Ma, Yau and Zuo \cite{HMYZ23} as well as an analogue of a result on motivic neaby cycles by Bussi, Joyce and Meinhardt \cite{BJM19}. 

Throughout this article, we work over a field $k$  of characteristic zero. 
By a $k$-variety we mean a separated and reduced scheme of finite type over $k$. For a $k$-variety $S$, by $S$-variety we mean a variety $X$ together with a morphism $X\to S$. 
For a non-negative integer $n$ and an $S$-variety $X$, we have the coherent $\mathcal O_X$-module $\mathcal P_{X/S}^n$ and $\Omega_{X/S}^{(n)}$ that are called the sheaf of principal parts of order $n$ and the sheaf of K\"ahler differentials of order $n$, respectively.
These sheaves are closely related to the higher Nash blowup. This blowup was studied, for example, in [[here we give serveral references]] mainly from the viewpoint of the desingularization problem.  
For a \(k\)-variety \(X\), its \(n\)-th Nash blouwp, denoted by  \(\Nash_n(X)\), can be defined as the blowup \(\mathrm{Bl}_{\mathcal P_{X/k}^n}(X)=\mathrm{Bl}_{\Omega_{X/k}^{(n)}}(X)\)
associated to \(\mathcal P_{X/k}^n\) or \(\Omega_{X/k}^{(n)}\).  Basic properties of blowups associated to general coherent sheaves were studied previously in  
\cite{OZ91,Vil06}. It is natural to consider a generalization of this blowup from \(k\)-varieties to \(S\)-varieties.
We may define the $n$-th Nash blowup $\Nash_n(X/S)$ of $X$ over $S$ to be $\mathrm{Bl}_{\mathcal P_{X/S}^n}(X)=\mathrm{Bl}_{\Omega_{X/S}^{(n)}}(X)$. We may speculate that this blowup would shed new light on the study of singularities of morphisms \(X\to S\)  or ones of the corresponding foliations on \(X\). 
(Although a further generalization to foliations with non-algebraic leaves would be also interesting, we do not treat it in this paper.)
The study of higher Nash blowups of \(S\)-varieties itself is not the main subject of this paper, this motivates as the study of sheaves \(\mathcal P_{X/k}^n\) and \(\Omega_{X/k}^{(n)}\).

Our principal interest is in the case where \(S=\mathbb A_k^1\) and $X$ is smooth. Focusing on the even more specialized situation \(X=\mathbb A_k^d\) is also important, as the local study of the given function $f: X\to \mathbb A_k^1$ is reduced to this situation. Our first small result is 
to verify an analogue of a result proved in \cite{Dua17, BD20,BJNB19}. Namely, we observe that for a morphism \(f:\mathbb A_k^d \to \mathbb A_k ^1\), there exists a free presentation 
\[
\mathcal O_{\mathbb A_k^d}^{\binom{d-1+n}{d}} \to \mathcal O_{\mathbb A_k^d}^{\binom{d+n}{d}-1} \to 
\Omega_{\mathbb A_k^d /\mathbb A_k^1}^{(n)}\to 0
\]
such that the left map is given by a higher Jacobian matrix \(\Jac_n(f)\) defined in \cite{BD20} (for the singule function case), which has higher derivatives of \(f\) and zeroes as entries. Restricting the above exact sequence to the hypersurface defined by \(f\) refines an earlier result in \cite{Dua17}, and is similar to the exact sequence independently obtained in \cite[Corollary 2.27]{BJNB19}. This free presentation shows that the \(n\)-th Jacobian ideal sheaf \(\JI_n(f)\) is generated by the maximal minors of \(\Jac_n(f)\). Combining this with  a result of Villamayor \cite{Vil06} also gives that for a function \(f:X \to \mathbb A_k^1\) on a smooth variety, the \(n\)-Nash blowup \(\Nash_n(X/\mathbb A_k^1)\) coincides with the blowup with respect to the ideal \(\JI_n(f)\); this is an analogue of Duarte's result for hypersurfaces \cite[Proposition 4.11]{Dua17}.

\subsection{}
Consider $k=\mathbb C$ and the ring $\mathbb C\{x\}=\mathbb C\{x_1,\dots,x_d\}$ of complex analytic functions on a neighborhood of $\mathbf 0$ in $\mathbb C^d$. In \cite{HMYZ23}, Hussain, Ma, Yau and Zuo use the definition of higher Jacobian matrices introduced earlier in \cite{Dua17} and work with the ideal of $\mathbb C[x]$ generated by all maximal minors of such a higher Jacobian matrix of a polynomial (to compare different versions of higher Jacobian matrices, see Remark \ref{versions-Jac}). 

Let $f$ be in $\mathbb C[x]$ such that $f(\mathbf 0)=0$. The $n$-th Nash blowup local algebra of $f$ is defined as follows $\mathcal M_n(f):=\mathbb C\{x\}/\langle f, \JI_n(f)\rangle$. It follows from Remark \ref{versions-Jac} that this definition is independent of the choice of versions mentioned previously of higher Jacobian matrices. Clearly, $\mathcal M_1(f)$ is the Milnor algebra. The $n$-th Nash blowup local algebra $\mathcal M_n(f)$ and the derivation Lie algebra $\mathcal L_n(f):=\mathrm{Der}(\mathcal M_n(f))$ are main objectives studied in \cite{HMYZ23}. In terms of \cite[Conjecture 1.5]{HMYZ23} it was expected that if $f\in \mathbb C[x]$ defines an isolated singularity at $\mathbf 0$, then $\mathcal M_n(f)$ is a contact invariant (this was checked in \cite{HMYZ23} with $n=d=2$). We go further when checking the conjecture with arbitrary $n, d\in \mathbb N^*$, especially we do not require the isolatedness of singularities; namely, we have

\begin{thm}[Theorem \ref{local-thm1}]
Let $f$ and $g$ be in $\mathbb C[x]$ with $f(\mathbf 0)=g(\mathbf 0)=0$. If $f$ is contact equivalent to $g$ at $\mathbf 0$, then $\mathcal M_n(f)$ is isomorphic to $\mathcal M_n(g)$ as $\mathbb C$-algebras for any $n\in \mathbb N^*$.	
\end{thm}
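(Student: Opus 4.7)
The plan is to factor contact equivalence into a ring automorphism of $\mathbb C\{x\}$ and a multiplication by a unit, and to verify invariance of $\mathcal M_n$ under each. By definition, there exist a ring automorphism $\Phi$ of $\mathbb C\{x\}$ (the pullback along a local biholomorphism fixing $\mathbf 0$) and a unit $u\in\mathbb C\{x\}^*$ such that $\Phi(g)=uf$. I therefore reduce the theorem to two intermediate claims: first, $\mathcal M_n(h)\cong\mathcal M_n(\Phi(h))$ for every ring automorphism $\Phi$ of $\mathbb C\{x\}$ and every $h\in\mathbb C\{x\}$; second, $\mathcal M_n(h)=\mathcal M_n(uh)$ for every $h\in\mathbb C\{x\}$ and every unit $u$. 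Chaining these with $h=g$ and $h=f$ gives $\mathcal M_n(g)\cong\mathcal M_n(\Phi(g))=\mathcal M_n(uf)=\mathcal M_n(f)$.

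The coordinate-change invariance is formal: as recalled in the introduction, $\mathcal J_n(h)$ is a Fitting ideal of the sheaf $\Omega^{(n)}_{\mathbb A_k^d/\mathbb A_k^1}$ attached to the morphism $h$, and an automorphism $\Phi$ of the source sends this construction for $h$ to that for $\Phi(h)$. Hence $\Phi(\mathcal J_n(h))=\mathcal J_n(\Phi(h))$ and $\Phi$ identifies the ideals $\langle h,\mathcal J_n(h)\rangle$ and $\langle\Phi(h),\mathcal J_n(\Phi(h))\rangle$, yielding the first claim.

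The core of the argument is the unit-multiplication invariance. I plan to deduce it from the stronger assertion that the module $\mathcal P^n_{R/\mathbb C\{t\},h}\otimes_R R/\langle h\rangle$ (with $R=\mathbb C\{x\}$ made into a $\mathbb C\{t\}$-algebra via $t\mapsto h$) depends only on the ideal $\langle h\rangle$, not on its generator. Using the presentation
\[
\mathcal P^n_{R/\mathbb C\{t\},h}=\mathbb C\{x,y\}\big/\langle h(x)-h(y),\, I^{n+1}\rangle,\qquad I=\langle x_i-y_i\rangle,
\]
and tensoring with $R/\langle h\rangle$ over the first $R$-factor gives
\[
\mathcal P^n_{R/\mathbb C\{t\},h}\otimes_R R/\langle h\rangle=\mathbb C\{x,y\}\big/\langle h(x),h(y),I^{n+1}\rangle,
\]
since $h(x)-h(y)$ combined with $h(x)$ forces $h(y)$ into the ideal. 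The identical computation with $uh$ in place of $h$ yields the same quotient: the generator $u(x)h(x)-u(y)h(y)$ together with $u(x)h(x)$ (and the fact that $u$ is a unit at both $x$ and $y$) again gives $\langle h(x),h(y),I^{n+1}\rangle$. Because the canonical augmentation $y_i\mapsto x_i$ does not depend on $h$ and the defining short exact sequence $0\to\Omega^{(n)}_{R/\mathbb C\{t\},h}\to\mathcal P^n_{R/\mathbb C\{t\},h}\to R\to 0$ splits as a sequence of $R$-modules (hence remains exact after base change to $R/\langle h\rangle$), the modules $\Omega^{(n)}_{R/\mathbb C\{t\},h}\otimes_R R/\langle h\rangle$ for $h$ and $uh$ coincide. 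Compatibility of Fitting ideals with base change then gives $\mathcal J_n(h)+\langle h\rangle=\mathcal J_n(uh)+\langle uh\rangle$, whence $\langle h,\mathcal J_n(h)\rangle=\langle uh,\mathcal J_n(uh)\rangle$ and $\mathcal M_n(h)=\mathcal M_n(uh)$. The principal technical point to verify is the validity of the presentation of $\mathcal P^n$ in the analytic category; since modulo $I^{n+1}$ the relevant quotient is a finitely generated $R$-module and the analytic completed tensor product coincides with its algebraic/formal analog, the calculation is valid.
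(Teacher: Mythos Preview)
Your overall architecture---factor contact equivalence as an automorphism followed by multiplication by a unit, and verify invariance under each---is exactly the paper's strategy. The automorphism step is also handled identically: the paper records this as Lemma~\ref{local-lem1}, a special case of the fact that $\JI_n$ is a Fitting ideal and hence compatible with \'etale pullback (Proposition~\ref{Jn-commute}).

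Where you genuinely diverge is in the unit-multiplication step, which the paper isolates as Lemma~\ref{local-lem3}. The paper argues by an explicit matrix computation: it writes down the upper-triangular matrix $T=\bigl(\partial^{\beta'-\beta}u/(\beta'-\beta)!\bigr)_{\beta,\beta'}$, checks via the Leibniz rule that $T\cdot M_\iota^f \equiv M_\iota^{uf}$ modulo $f$ for each maximal square submatrix $M_\iota$ of $\Jac_n$, and takes determinants to conclude that every maximal minor of $\Jac_n(uf)$ agrees modulo $f$ with a unit times the corresponding minor of $\Jac_n(f)$. Your route is more conceptual: you observe that $\mathcal P^n_{R/\mathbb C\{t\},h}\otimes_R R/\langle h\rangle$ has the explicit presentation $\mathbb C\{x,y\}/\langle h(x),h(y),I^{n+1}\rangle$, which visibly depends only on the ideal $\langle h\rangle$ and not on its generator; the split exact sequence then transfers this to $\Omega^{(n)}$, and compatibility of Fitting ideals with base change finishes. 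Both arguments are correct. The paper's computation is elementary and self-contained, and yields the precise unit factor $u^{\binom{d-1+n}{d}}$ relating the minors. Your argument avoids matrices entirely, makes the geometric content transparent (after restriction to the zero locus, only the subscheme matters), and would adapt with no change to, say, complete intersections or more general base rings. The one point you flag yourself---that the analytic $\mathcal P^n$ admits the claimed presentation---is fine: modulo $I^{n+1}$ the quotient $\mathbb C\{x,y\}/I^{n+1}$ is already a finite free $\mathbb C\{x\}$-module, so no completion issues arise.
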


\subsection{}
Let $X$ be a smooth $k$-variety. Let $f$ be a non-constant regular function on $X$, and let $f$ also denote the corresponding element in $\mathcal O_X(X)$. As mentioned above, to such an $f$ associate the $\mathcal O_X$-module $\Omega_f^{(n)}$, the Nash blowup $\Nash_n(f)$, and the sheaf of ideals $\JI_n(f)\subseteq \mathcal O_X$. Now, we associate to $f$ the motivic zeta function $Z_f(T)$ and nearby cycles $\mathscr S_f$. It would be interesting to realize an essential relation between $\JI_n(f)$ and $Z_f(T)$ as well as $\mathscr S_f$. In fact, the latter strongly motivates our present work.

Motivic zeta function is a profound incarnation of applications of motivic integration to singularity theory. Indeed, several singularity invariants such as Hodge-Euler characteristic and Hodge spectrum can be recovered from motivic zeta functions via appropriate Hodge realizations (see e.g. \cite{DL98, DL02}). Since the motivic zeta function relates directly to the monodromy conjecture, it has been widely taken care by several geometers and singularity theorists. For instance, Denef-Loeser (cf. \cite{DL98, DL02}) give it an explicit description using log-resolution, hence they can prove its rationality and list a candidate set of poles concerning the log-resolution numerical data, which is very important for any approach to the conjecture. Moreover, the motivic zeta function and the motivic nearby cycles are also important in mathematical physics, especially in the study of motivic Donalson-Thomas invariants theory for noncommutative
Calabi–Yau threefolds (cf. \cite{BJM19}).

For any integer $m\geq 1$, let $K_0^{\mu_m}(\Var_S)$ denote the $\mu_m$-equivariant Grothendieck ring of $S$-varieties endowed with a good $\mu_m$-action. Localizing $K_0^{\mu_m}(\Var_S)$ with respect to the class $\L$ of the trivial line bundle over $S$ obtains $\mathscr M_S^{\mu_m}$, and taking inductive limit of $\mathscr M_S^{\mu_m}$ with respect to $m$ gives the ring $\mathscr M_S^{\hat\mu}$ (see Section \ref{Sec3.1}). Write 
$$Z_f(T)=\sum_{m\geq 1}\big[\mathscr X_m(f)\big]\L^{-dm}T^m,$$ 
where $\mathscr X_m(f)$ is the $m$-th iterated contact locus of $f$ defined in Section \ref{Sec3.1}. The following theorem is also a main result of this article. 

\begin{thm}[Theorem \ref{theorem1}]
Let $f$ and $g$ be non-constant regular functions on $X$ with the same scheme-theoretic zero locus $X_0$. Suppose that $g-f\in \JI_2(f)$. If $d=2$ and $X_0$ has nodes, suppose additionally that $k$ is quadratically closed. Then, for any integer $m\geq 1$, the identity $\big[\mathscr X_m(f)\big]=\big[\mathscr X_m(g)\big]$ holds in $\mathscr M_{X_0}^{\mu_m}$. As a consequence, $Z_f(T)=Z_g(T)$ and $\mathscr S_f=\mathscr S_g$.
\end{thm}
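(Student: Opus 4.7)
The plan is to reduce everything to the equality $[\mathscr X_m(f)] = [\mathscr X_m(g)]$ in $\mathscr M^{\mu_m}_{X_0}$ for every integer $m \geq 1$; once that is in hand, the equality of motivic zeta functions follows directly from their defining power series, and the equality of motivic nearby cycles follows by taking the appropriate limit. Throughout I would work Zariski-locally on $X$ over $X_0$, so that, after shrinking, the presentation of $\Omega^{(2)}_{X/\mathbb A^1_k}$ recalled in Section~1 applies and $\JI_2(f)$ is the ideal of maximal minors of $\Jac_2(f)$. Writing $g - f = \sum_I h_I M_I$ with the $M_I$ running over the maximal minors of $\Jac_2(f)$ then makes the contribution of the difference completely explicit.

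The central technical estimate I would aim for is the following: for any arc $\gamma \in \mathcal L(X)$ with $\gamma(0) \in X_0$ and $\ord_t f(\gamma(t)) = m$, one has $\ord_t (g-f)(\gamma(t)) > m$. To prove this I would Taylor-expand each $M_I(\gamma(t))$ in $t$ and exploit the fact that the minors of $\Jac_2(f)$ carry an extra order of vanishing along $\Crit(f)$ compared with the first Jacobian ideal, because they are built out of second-order rather than first-order derivatives of $f$. Given the estimate, the $m$-jets of $f(\gamma)$ and $g(\gamma)$, together with their angular components, coincide on $\mathscr X_m(f)$.

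From this estimate I would construct the required $\mu_m$-equivariant isomorphism $\mathscr X_m(f) \simeq \mathscr X_m(g)$ over $X_0$ by a Newton-type iteration applied to the interpolating family $f_s := f + s(g-f)$, $s\in \mathbb A^1$. Given $\gamma \in \mathscr X_m(f)$, one corrects $\gamma$ order-by-order by modifications lying in $t^{m+1}\mathcal L(X)$ so that the corrected arc satisfies the defining conditions for $\mathscr X_m(g)$; the canonical character of the construction yields $\mu_m$-equivariance, and the symmetric role of $f$ and $g$ makes the assignment an isomorphism. Constancy of $[\mathscr X_m(f_s)]$ along $s$ then delivers the desired equality in $\mathscr M^{\mu_m}_{X_0}$.

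The main obstacle I anticipate is the case $d=2$ with $X_0$ nodal. At a node, the tangent cone of $X_0$ consists of two lines, and arcs tangent to these lines must be analyzed branch-by-branch, since the Newton iteration above does not apply directly around such a singular point of $X_0$. The hypothesis that $k$ be quadratically closed lets one split the node into two smooth branches defined over $k$ by a local coordinate change after which $f$ becomes $xy$ up to higher-order terms; the iteration can then be performed on each branch separately and reassembled in a $\mu_m$-equivariant manner. Verifying that this reassembly produces the correct class in $\mathscr M^{\mu_m}_{X_0}$, and in particular that the $\mu_m$-action is respected under the branch decomposition, is the most delicate technical point of the argument.
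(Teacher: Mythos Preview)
Your central technical estimate is false as stated. Take a smooth point $p$ of $X_0$; after an \'etale coordinate change one may write $f=x_1$ there, and then $\Jac_2(f)$ has a maximal minor equal to $1$, so $\JI_2(f)$ is locally the unit ideal. The hypothesis $g-f\in\JI_2(f)$ therefore imposes no constraint near $p$; the only information is $g=uf$ with $u$ a unit. If $u(p)\ne 1$ (as happens already for $f=x_1$, $g=2x_1$ on $\mathbb A_k^d$), then for any arc $\gamma$ through $p$ with $\ord_t f(\gamma)=m$ one has $\ord_t(g-f)(\gamma)=\ord_t\big((u-1)f\big)(\gamma)=m$, not $>m$. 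Thus $\mathscr X_m(f)$ and $\mathscr X_m(g)$ are genuinely different subvarieties of $\mathscr L_m(X)$ over $(X_0)_{\mathrm{sm}}$, and no identity-type map relates them. Your proposed remedy, correcting $\gamma$ by elements of $t^{m+1}\mathscr L(X)$, is vacuous at the level of $m$-jets: such corrections do not change the image of $\gamma$ in $\mathscr L_m(X)$, so no Newton iteration of this kind can produce a nontrivial map. The interpolation $f_s=f+s(g-f)=(1+sa')f$ need not keep the same scheme-theoretic zero locus, there is no general deformation-invariance principle for classes in $\mathscr M_{X_0}^{\mu_m}$ to invoke, and the roles of $f$ and $g$ are not symmetric since $\JI_2(f)\ne\JI_2(g)$ in general.

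The paper avoids arc-by-arc comparison entirely. It fixes an $m$-separating log-resolution $h:Y\to X$ of $(X,X_0)$ and uses the Denef--Loeser formula $[\mathscr X_m(f)]=\L^{md}\sum_{N_i\mid m}[\widetilde E_i^\circ(f)]\L^{-m\nu_i/N_i}$. Writing $g-f=a'f$ locally, a Leibniz-rule computation with the explicit generators of $\JI_2(f)$ shows that $a'(\overline x)=0$ at every singular point $\overline x$ of $X_0$ in the non-nodal cases; this is the only place the structure of $\JI_2(f)$ is used. It follows that $\ord_{E_i}h^*(g-f)>N_i(f)$ for every exceptional $E_i$, so $h$ is simultaneously a resolution for $g$ with $N_i(g)=N_i(f)$ and with the same unit on each $E_i^\circ$; hence $\widetilde E_i^\circ(f)=\widetilde E_i^\circ(g)$. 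Over the smooth locus the strict transforms have $N_j=1$, so $\widetilde E_j^\circ=E_j^\circ$ for both $f$ and $g$ regardless of the unit, and the obstruction that broke your estimate is invisible to the formula. The nodal case is not handled by splitting branches but by a single blowup: one computes that the two covers differ by the constant $c\in k^*$ arising from $g=c(x_1^2+x_2^2)+\cdots$, and the isomorphism $(z,y)\mapsto(z/\sqrt{c},y)$ requires only $\sqrt{c}\in k$, which is where the quadratically closed hypothesis enters.
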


The theorem points out the dependence of the motivic zeta function and the motivic nearby cycles of $f$ on the second order Jacobian ideal sheaf $\JI_2(f)$. We can compare it with the ones obtained by Bussi-Joyce-Meinhardt \cite[Theorems 3.2, 3.6]{BJM19}, in which instead of $\JI_2(f)$ they concern the ideal sheaf $\JI_1(f)^3$. The proof in \cite{BJM19} uses in a crucial way Proposition 4.3 of \cite{BBDJS15}. Our proof uses $m$-separating log-resolution whose existence is given in \cite{BFLN22}. 

%


\section{Higher Jacobian matrices and higher Jacobian ideals}

\subsection{Higher Nash blowups}
We first recall the construction of Oneti-Zatini \cite{OZ91} on the Nash blowup associated to a coherent sheaf. Let $X$ be a reduced Noetherian scheme, and let $\mathcal M$ be a coherent $\mathcal O_X$-module locally of constant rank $r$ on an open dense subscheme $U$ of $X$. Consider the functor $\mathcal G$ from the category of $X$-schemes to the category of sets which sends each $X$-scheme $Y$ to the set of $\mathcal O_Y$-modules that are locally free quotient of rank $r$ of $\mathcal M\otimes_{\mathcal O_X}\mathcal O_Y$. Then $\mathcal G$ is contravariant and represented by an $X$-scheme $\Grass_r\mathcal M$. It is a fact that $\Grass_r\mathcal M \times_XU$ is isomorphic to $U$. 

\begin{definition}[Oneto-Zatini]
The closure of $\Grass_r\mathcal M \times_XU$ is called the {\it blowup of $X$ at $\mathcal M$}, and denoted by $\mathrm{Bl}_{\mathcal M}(X)$. 
\end{definition}

The natural morphism $\pi_{\mathcal M}: \Grass_r\mathcal M \to X$ is birational as $X$ is reduced, and it is projective as $\mathcal M$ is coherent. Note that $(\pi_{\mathcal M}^*\mathcal M)/\mathrm{Tor}(\pi_{\mathcal M}^*\mathcal M)$ is locally free. As shown in \cite{OZ91}, $\mathrm{Bl}_{\mathcal M}(X)$ has the universal property, namely, if $h: Y\to X$ is a modification such that $(h^*\mathcal M)/\mathrm{Tor}(h^*\mathcal M)$ is locally free, then there exists a unique morphism $\phi: Y\to \mathrm{Bl}_{\mathcal M}(X)$ such that $\pi_{\mathcal M}\circ \phi=h$.

Let $S$ be a reduced Noetherian scheme, and $f: X\to S$ be a morphism of schemes. The diagonal morphism 
$$\Delta=\Delta_f: X\to X\times_SX$$ 
of $f$ is locally a closed immersion, so $\Delta(X)$ is closed in an open subset $V$ of $X\times_SX$. Let $\mathcal I=\mathcal I_f$ be the ideal sheaf defining $\Delta(X)$ in $V$, i.e. $\mathcal I$ is the kernel of 
$$\Delta^{\#}: \mathcal O_{X\times_SX} \to \Delta_*\mathcal O_X.$$ 
For any $n\in \mathbb N^*$, we define
$$\mathcal P_f^n:=\mathcal P_{X/S}^n:=\mathcal O_{X\times_SX}/\mathcal I^{n+1}$$
and
$$\Omega_f^{(n)}:=\Omega_{X/S}^{(n)}:=\mathcal I/\mathcal I^{n+1}.$$ 
As explained in \cite[Sections 16.3, 16.4]{Gro67}, $\mathcal P_f^n$ (hence $\Omega_f^{(n)}$) depends functorially on $f$, and the construction is local, i.e. for any open subscheme $U$ of $X$ we have $\mathcal P_{f|U}^n=\mathcal P_f^n|_U$ and $\Omega_{f|U}^{(n)}=\Omega_f^{(n)}|_U$. Let $p_1$ and $p_2$ be canonical projections of $X\times_SX$. Any of these two morphisms defines a homomorphism of sheaves of rings $d_i:\mathcal O_X\to \Omega_f^{(n)}$ ($i=1, 2$) for every $n\in \mathbb N$. Thus there are two structures of $\mathcal O_X$-algebras on $\Omega_f^{(n)}$; we shall fix the $\mathcal O_X$-algebra structure induced by $p_1$. 

\begin{definition}
The $\mathcal O_X$-modules $\mathcal P_f^n$ and $\Omega_f^{(n)}$ are called the {\it sheaf of principal parts of order $n$} of $f$ and the {\it module of K\"ahler differentials of order $n$} of $f$, respectively.
\end{definition}

Let us consider morphisms of schemes $f: X\to S$ and $g: S\to B$. From the commutative diagram of schemes
$$
\begin{CD}
X @>\Id>> X\\
@V\Delta_fVV @VV\Delta_{g\circ f}V\\
X\times_SX @>\ell>> X\times_BX
\end{CD}
$$
we have the following commutative diagram of structural sheaves
$$
\begin{CD}
\mathcal O_{X\times_BX} @>\ell^*>> \mathcal O_{X\times_SX}\\
@V\Delta_{g\circ f}^{\#}VV @VV\Delta_f^{\#}V\\
(\Delta_{g\circ f})_*\mathcal O_X @>>> (\Delta_f)_*\mathcal O_X
\end{CD}
$$
Thus $\ell^*(\mathcal I_{g\circ f})\subseteq \mathcal I_f$, and in general for any $n\in \mathbb N$, $\ell^*(\mathcal I_{g\circ f}^n)\subseteq \mathcal I_f^n$. This fact yields the canonical morphism of sheaves of $\mathcal O_X$-algebras
\begin{align}\label{Theta}
\Theta_n: \Omega_{g\circ f}^{(n)}\to \Omega_f^{(n)}.
\end{align}
We now assume that $\Delta_f$, $\Delta_g$ and $\ell: X\times_SX\to X\times_BX$ are closed immersions. Let $\mathcal K$ be the sheaf of ideals of $\mathcal O_{X\times_BX}$ corresponding to $\ell$. Then, as explained in the proof of \cite[Proposition 16.4.18]{Gro67}, 
$$\mathcal K=(f\times f)^*(\mathcal I_g)\cdot \mathcal O_{X\times_BX} \subseteq \mathcal I_{g\circ f}$$ 
and $\Theta_n$ is nothing but the canonical projection
$$\Omega_{g\circ f}^{(n)}\to \Omega_{g\circ f}^{(n)}\big/\big((\mathcal I_{g\circ f}^{n+1}+\mathcal K)/\mathcal I_{g\circ f}^{n+1}\big).$$
Similarly, from the commutative diagram
$$
\begin{CD}
X @>f>> S\\
@V\Delta_{g\circ f}VV @VV\Delta_gV\\
X\times_BX @>f\times f>> S\times_BS
\end{CD}
$$
we have the canonical morphisms of sheaves of $\mathcal O_X$-algebras, for any $n\in \mathbb N$,
$$\Psi'_n: f^*\Omega_g^{(n)}=\Omega_g^{(n)}\otimes_{\mathcal O_S}\mathcal O_X\to \Omega_{g\circ f}^{(n)}$$
and
$$\Psi''_n: \mathcal K\hookrightarrow f^*\Omega_g^{(n)}\to \Omega_{g\circ f}^{(n)}.$$
Let $\Psi_n$ be the homomorphism $\mathcal K\otimes_{\mathcal O_X}\mathcal P_{g\circ f}^n\to \Omega_{g\circ f}^{(n)}$ induced by $\Psi''_n$, namely, $\Psi_n$ is the composition of 
$$\Psi''_n\otimes \Id: \mathcal K\otimes_{\mathcal O_X}\mathcal P_{g\circ f}^n\to \Omega_{g\circ f}^{(n)}\otimes_{\mathcal O_X}\mathcal P_{g\circ f}^n$$ 
with the product homomorphism 
$$\Omega_{g\circ f}^{(n)}\otimes_{\mathcal O_X}\mathcal P_{g\circ f}^n\to \Omega_{g\circ f}^{(n)}.$$ 
Then $\Im(\Psi_n)$ is nothing but the sheaf of ideals generated by $\Psi'_n(\mathcal K)=\Im(\Psi''_n)$.

\begin{lemma}\label{exact}
With the previous notation, $\Theta_n$ is surjective and $\ker(\Theta_n)=\Im(\Psi_n)$. As a consequence, there is an exact sequence of sheaves of $\mathcal O_X$-modules
$$
\begin{CD}
\mathcal K\otimes_{\mathcal O_X}\mathcal P_{g\circ f}^{n-1} @>\Psi_n>> \Omega_{g\circ f}^{(n)} @>\Theta_n>> \Omega_f^{(n)} @>>> 0.
\end{CD}
$$	
\end{lemma}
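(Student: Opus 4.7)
The plan is to reduce the claim to an elementary ideal-theoretic computation in $\mathcal{O}_{X\times_B X}$. Two geometric facts drive everything: since $\ell$ is a closed immersion, $\ell^{\#}\colon \mathcal{O}_{X\times_B X}\twoheadrightarrow \ell_{*}\mathcal{O}_{X\times_S X}$ is surjective with kernel $\mathcal{K}$; and the identity $\Delta_{g\circ f} = \ell\circ \Delta_f$ forces $\mathcal{K}\subseteq \ker(\Delta_{g\circ f}^{\#}) = \mathcal{I}_{g\circ f}$. The statement is local, so I argue on sections throughout.

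First I would show that $\ell^{\#}$ restricts to a surjection $\mathcal{I}_{g\circ f}\twoheadrightarrow \mathcal{I}_f$. Given a local section $s\in \mathcal{I}_f$, lift it to $\tilde{s}\in \mathcal{O}_{X\times_B X}$; the commutative diagram of structural sheaves displayed in the excerpt then yields
$$\Delta_{g\circ f}^{\#}(\tilde{s}) \;=\; \Delta_f^{\#}\circ \ell^{\#}(\tilde{s}) \;=\; \Delta_f^{\#}(s) \;=\; 0,$$
so $\tilde{s}\in \mathcal{I}_{g\circ f}$. Since $\ell^{\#}$ is a ring homomorphism, this upgrades to $\ell^{\#}(\mathcal{I}_{g\circ f}^{n+1}) = \mathcal{I}_f^{n+1}$, whence $\Theta_n$ is surjective after passing to the quotient modules.

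For the kernel, a local section $\bar{s}\in \Omega_{g\circ f}^{(n)}$ belongs to $\ker\Theta_n$ iff $\ell^{\#}(s)\in \mathcal{I}_f^{n+1}$, iff $s\in (\ell^{\#})^{-1}(\mathcal{I}_f^{n+1}) = \mathcal{I}_{g\circ f}^{n+1} + \mathcal{K}$, using both $\ell^{\#}(\mathcal{I}_{g\circ f}^{n+1}) = \mathcal{I}_f^{n+1}$ and $\ker \ell^{\#} = \mathcal{K}$. Hence $\ker\Theta_n = (\mathcal{I}_{g\circ f}^{n+1} + \mathcal{K})/\mathcal{I}_{g\circ f}^{n+1}$. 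By the observation recorded just before the lemma, $\Im(\Psi_n)$ equals the ideal of $\mathcal{P}_{g\circ f}^{n}$ generated by $\Im(\Psi''_n)$, and $\Psi''_n$ is the composition $\mathcal{K}\hookrightarrow \mathcal{I}_{g\circ f}\twoheadrightarrow \Omega_{g\circ f}^{(n)}$, whose image is already $(\mathcal{K}+\mathcal{I}_{g\circ f}^{n+1})/\mathcal{I}_{g\circ f}^{n+1}$; because $\mathcal{K}$ is an $\mathcal{O}_{X\times_B X}$-ideal, this image is itself an ideal of $\mathcal{P}_{g\circ f}^{n}$, so no further generation is needed and it coincides with $\Im(\Psi_n)$. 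Comparing the two descriptions gives $\ker\Theta_n = \Im(\Psi_n)$, yielding the stated exact sequence.

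The calculation carries no serious obstacle; the only points to keep straight are the two ambient rings $\mathcal{O}_{X\times_B X}$ and $\mathcal{O}_{X\times_S X}$ and the distinction between the ring map $\ell^{\#}$ and the $\mathcal{O}$-module pullback $\ell^{*}$. The conceptually essential containment is $\mathcal{K}\subseteq \mathcal{I}_{g\circ f}$, which is precisely what guarantees that the preimage $(\ell^{\#})^{-1}(\mathcal{I}_f^{n+1})$ inside $\mathcal{I}_{g\circ f}$ decomposes as the clean sum $\mathcal{I}_{g\circ f}^{n+1}+\mathcal{K}$, and from there the exactness is immediate.
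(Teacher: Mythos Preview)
Your argument correctly establishes the first sentence of the lemma: surjectivity of $\Theta_n$ and the equality $\ker\Theta_n=\Im\Psi_n$, where $\Psi_n$ is the map defined on $\mathcal K\otimes_{\mathcal O_X}\mathcal P_{g\circ f}^{n}$. In fact you give a self-contained proof of this part, whereas the paper simply invokes \cite[Proposition 16.4.18]{Gro67}.

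However, you have not addressed the ``as a consequence'' part, and this is where the actual content of the paper's proof lies. The exact sequence in the lemma has domain $\mathcal K\otimes_{\mathcal O_X}\mathcal P_{g\circ f}^{n-1}$, not $\mathcal K\otimes_{\mathcal O_X}\mathcal P_{g\circ f}^{n}$. Knowing $\ker\Theta_n=\Im\Psi_n$ for the latter does not immediately give exactness for the former; one must check that $\Psi_n$ kills $\mathcal K\otimes_{\mathcal O_X}(\mathcal I_{g\circ f}^{n}/\mathcal I_{g\circ f}^{n+1})$, i.e.\ that it factors through the surjection $\mathcal K\otimes_{\mathcal O_X}\mathcal P_{g\circ f}^{n}\twoheadrightarrow \mathcal K\otimes_{\mathcal O_X}\mathcal P_{g\circ f}^{n-1}$. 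The paper's proof is devoted precisely to this step: if $a$ is a section of $\mathcal K$ and $b$ is a section of $\mathcal I_{g\circ f}^{n}/\mathcal I_{g\circ f}^{n+1}$, then $a\otimes b\mapsto ab=0$ in $\Omega_{g\circ f}^{(n)}=\mathcal I_{g\circ f}/\mathcal I_{g\circ f}^{n+1}$ because $\mathcal K\subseteq \mathcal I_{g\circ f}$ forces $ab\in \mathcal I_{g\circ f}^{n+1}$. You already have the inclusion $\mathcal K\subseteq \mathcal I_{g\circ f}$ in hand and even flag it as ``conceptually essential'', so the missing step is short---but it is not automatic from what you wrote, and it is the whole point of the refinement from $n$ to $n-1$.
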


\begin{proof}
From  \cite[Proposition 16.4.18]{Gro67}, the image of $\mathcal K$ in $\Omega_{g\circ f}^{(n)}$ generates the kernel of $\Theta_n$ as a $\mathcal P_{g\circ f}^{n}$-submodule. Namely, the natural map 
$$\Psi_n: \mathcal K\otimes_{\mathcal O_X}\mathcal P_{g\circ f}^{n} \to \Omega_{g\circ f}^{(n)}$$ 
is surjective. We need to show that this factors through 
$$\mathcal K\otimes_{\mathcal O_X}\mathcal P_{g\circ f}^n
\twoheadrightarrow
\mathcal K\otimes_{\mathcal O_X}\mathcal P_{g\circ f}^{n-1}.$$
To do so, we first observe that  $\mathcal K \subseteq \mathcal I_{g\circ f}$. 
If $a$ is a section of $\mathcal K$ and if $b$ is a section of $\mathcal I_{g\circ f} ^n/\mathcal I_{g\circ f} ^{n+1}$, then the section $a\otimes b$ of 
$\mathcal K\otimes_{\mathcal O_X}\mathcal P_{g\circ f}^{n}$ 
maps to the section $ab = 0 $ of
$\Omega_{g\circ f}^{(n)}=\mathcal I_{g\circ f} /\mathcal I_{g\circ f} ^{n+1} $. Thus, we get the desired factorization.
\end{proof}

\begin{notation}\label{notation}
Let $x=(x_1,\dots,x_d)$ be an ordered family of $n$ variables, and $f$ in $k[x]$. Let $\alpha=(\alpha_1,\dots,\alpha_d)$ and $\beta=(\beta_1,\dots,\beta_d)$ be in $\mathbb N^d$. From now on, we use the following notation
\begin{itemize}
\item[(i)] $\alpha!=\alpha_1!\cdots \alpha_d!$, $|\alpha|=\alpha_1+\cdots+\alpha_d$,

\item[(ii)] $\alpha\geq \beta$ (equivalently, $\beta\leq \alpha$) if $\alpha_i\geq \beta_i$ for all $1\leq i\leq d$,\\ 
$\alpha>\beta$ (equivalently, $\beta<\alpha$) if $\alpha\geq \beta$ but $\alpha\not=\beta$,

\item[(iii)] $x^{\alpha}=x_1^{\alpha_1}\cdots x_d^{\alpha_d}$, and 
$$\partial^{\alpha}f=\frac{\partial^{|\alpha|}f}{\partial x_1^{\alpha_1}\cdots \partial x_d^{\alpha_d}}.$$
\end{itemize}	
\end{notation}

\begin{example}\label{keyexample}
Let $x=(x_1,\dots,x_d)$, $x'=(x'_1,\dots,x'_d)$ and $R=k[x]$. Let $f\in R$, which defines a morphism $f: X\to \mathbb A_k^1$. Consider the special case $X=\mathbb A_k^d=\Spec R$, $S=\mathbb A_k^1=\Spec k[t]=\Spec k[t']$ and $B=\Spec k$. Then we have 
$$X\times_kX=\Spec k[x,x'],\ \ S\times_kS=\Spec k[t,t'],$$ 
and
$$\mathcal I_{g\circ f}=\langle x'_1-x_1,\dots,x'_d-x_d\rangle,\ \ \mathcal I_g=\langle t-t'\rangle,\ \ \mathcal K=\langle f(x')-f(x)\rangle.$$ 
We obtain furthermore that
$\mathcal P_{g\circ f}^{n-1}\cong R^{{{d+n-1}\choose{d}}}$ and its canonical $R$-basis is 
$$\{[(x'-x)^{\beta}]\mid 0\leq |\beta|\leq n-1\};$$ 
similarly, $\Omega_{g\circ f}^{(n)}\cong R^{{{d+n}\choose{d}}-1}$ and its canonical $R$-basis is $\{[(x'-x)^{\alpha}]\mid 1\leq |\alpha|\leq n\}$. A system of generators of $R$-module $\mathcal K\otimes_R\mathcal P_{g\circ f}^{n-1}$ is formed by the following vectors
$$(f(x')-f(x))\otimes [(x'-x)^{\beta}], \quad 0\leq |\beta|\leq n-1.$$
By Taylor's expansion we have
$$f(x')-f(x)=\sum_{\gamma> 0}\frac{\partial^{\gamma}f(x)}{\gamma!} (x'-x)^{\gamma},$$
hence for $0\leq |\beta|\leq n-1$,
\begin{align*}
\Psi_n\big((f(x')-f(x))\otimes [(x'-x)^{\beta}]\big)&=\Big[(x'-x)^{\beta}\sum_{\gamma> 0}\frac{\partial^{\gamma}f(x)}{\gamma!} (x'-x)^{\gamma}\Big]\\
&=\sum_{1\leq |\alpha|\leq n,\ \! \alpha> \beta}\frac{\partial^{\alpha-\beta}f(x)}{(\alpha-\beta)!}[(x'-x)^{\alpha}].
\end{align*}
Using the exact sequence in Lemma \ref{exact} we have that 
$$\bigg\{\sum_{1\leq |\alpha|\leq n,\ \! \alpha> \beta}\frac{\partial^{\alpha-\beta}f(x)}{(\alpha-\beta)!}[(x'-x)^{\alpha}] \mid  0\leq |\beta|\leq n-1 \bigg\}$$
is a system of generators of $\ker(\Theta_n)$ as an $R$-module.	
\end{example}


Similarly as in \cite[Corollary 16.4.22]{Gro67}, we can show that if $f$ is of finite type, then $\Omega_f^{(n)}$ is a quasi-coherent sheaf on $X$ and an $\mathcal O_X$-module of finite type. As $S$ is Noetherian and $f$ is of finite type, it implies that $f$ is of finite presentation, hence $\Omega_f^{(n)}$ is an $\mathcal O_X$-module of finite presentation (this can be also seen explicitly due to Lemma \ref{exact}). 

If $f$ is a morphism of varieties, we denote by $\Crit(f)$ its critical locus. 

\begin{lemma}\label{genrank}
Let $f: X\to S$ be a dominant morphism of $k$-varieties of relative dimension $e$ with $\Crit(f)$ of codimension $\geq 1$. Then $\Omega_f^{(n)}$ is a coherent $\mathcal O_X$-module locally free of constant rank $r={{e+n}\choose{e}}-1$ on an open dense subset $U$ of $X$.	
\end{lemma}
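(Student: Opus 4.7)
The plan is to take $U \subseteq X$ to be the smooth locus of $f$, namely $U = X \setminus \Crit(f)$, and show that $\Omega_f^{(n)}|_U$ is locally free of rank $r = \binom{e+n}{e}-1$. Openness of $U$ is immediate since $\Crit(f)$ is closed in $X$. For density, the codimension hypothesis means that no generic point of $X$ lies in $\Crit(f)$, so $U$ contains all generic points of the $k$-variety $X$ and is therefore dense in each irreducible component.

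Next I would work locally on $U$. Thanks to the compatibility $\Omega_{f|U}^{(n)} = \Omega_f^{(n)}|_U$ recorded in Section 2.1, we may freely shrink $X$. At any point of $U$ the morphism $f|_U$ is smooth, and dominance together with the relative dimension hypothesis forces the relative dimension at this point to equal $e$. By the standard local structure of smooth morphisms (EGA IV, \S 17.11), after further shrinking we obtain a factorization of $f$ as an étale morphism $X \to \mathbb{A}_S^e = \Spec \mathcal{O}_S[y_1,\dots,y_e]$ followed by the projection $\mathbb{A}_S^e \to S$. Letting $x_1,\dots,x_e \in \mathcal{O}_X$ be the pullbacks of $y_1,\dots,y_e$, smoothness of $f$ implies that the diagonal $\Delta_f$ is a regular immersion in a neighborhood of $\Delta_f(U)$ whose ideal $\mathcal{I}_f$ is locally generated by the regular sequence $x'_1-x_1,\dots,x'_e-x_e$ in $\mathcal{O}_{X\times_S X}$.

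Given the regular immersion description, the standard structure theorem for regular immersions (EGA IV, \S 16 and \S 19) yields that the associated graded $\bigoplus_{k\geq 0}\mathcal{I}_f^k/\mathcal{I}_f^{k+1}$ is a polynomial algebra over $\mathcal{O}_X$ on the classes $[x'_i-x_i]$. Applied to $\Omega_f^{(n)} = \mathcal{I}_f/\mathcal{I}_f^{n+1}$, this produces the explicit $\mathcal{O}_X$-basis
$$\bigl\{[(x'-x)^{\alpha}] : 1 \leq |\alpha| \leq n\bigr\}$$
exactly as in the local computation of Example \ref{keyexample}. Counting multi-indices $\alpha \in \mathbb{N}^e$ with $1 \leq |\alpha| \leq n$ gives the rank $\binom{e+n}{e}-1$, proving the lemma. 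The main (minor) obstacle I anticipate is cleanly invoking the étale-local structure of smooth morphisms to reduce to the polynomial situation and then appealing to the regularity of $\Delta_f$, but both steps are standard consequences of EGA IV and do not require any new input beyond what has already been recalled in Section 2.1 and Example \ref{keyexample}.
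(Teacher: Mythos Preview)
Your proof is correct and follows essentially the same strategy as the paper's: restrict to the locus where $f$ is smooth, use the local structure of smooth morphisms, and read off the rank from the explicit monomial basis in the spirit of Example \ref{keyexample}. The only difference is one of detail: the paper tersely reduces all the way to $\Omega_{\mathbb{A}_k^e/k}^{(n)}$ (asserting one may take $X=\mathbb{A}_k^m$, $S=\mathbb{A}_k^l$, and then $l=0$), whereas you keep $S$ general and instead spell out the \'etale factorization $X\to\mathbb{A}_S^e$ and the regular-sequence description of $\mathcal{I}_f$ near the diagonal --- this makes explicit exactly the steps the paper leaves to the reader.
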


\begin{proof}
Since the statement is local, we can take the smooth part of $X$, hence it suffices to consider $X=\mathbb A_k^m$ and $S=\mathbb A_k^l$ (where $e=m-l$). We can assume further that $l=0$ (hence $m=e$). Finally, it is fact that $\mathrm{rank}\ \Omega_{\mathbb A_k^e/k}^{(n)}=r$.
\end{proof}
 
Note that if $X$ is smooth and $S=\mathbb A_k^1$, then the condition that $\Crit(f)$ has codimension $\geq 1$ is nothing but $f$ non-constant.

\begin{definition}
Let $f: X\to S$ be a morphism of $k$-varieties with $\Crit(f)$ of codimension $\geq 1$. The {\it $n$-th Nash blowup of $f$} is defined to be the blowup of $X$ at $\Omega_f^{(n)}$, $\mathrm{Bl}_{\Omega_f^{(n)}}(X)$, and denoted by $\Nash_n(f)$, or by $\Nash_n(X/S)$ when the morphism $f: X\to S$ is fixed.	
\end{definition}

Consider the particular case where $f$ is $X\to \Spec k$. Assume the $k$-dimension of $X$ is $d$. Then $\Nash_n(X/\Spec k)$ is nothing but the $n$-th Nash blowup of $X$, denoted by $\Nash_n(X)$, in the sense of Yasuda \cite{Yas07}. Denote by $\Delta(X)^{(n)}$ the $n$-th infinitesimal neighborhood of $\Delta(X)$, and by $\pr_1$ the restricted first projection $\Delta(X)^{(n)}\to X$. According to \cite{Yas07}, $\Nash_n(X)$ is the irreducible component dominating $X$ of the relative Hilbert scheme $\Hilb_{{d+n}\choose{d}}(\pr_1)$ for a constant Hilbert polynomial ${d+n}\choose{d}$. Since the moduli schemes $\Hilb_{{d+n}\choose{d}}(\pr_1)$ and $\Grass_{{d+n}\choose{d}}(\mathcal O_X\oplus \Omega_f^{(n)})$ present equivalent functors we have 
$$\Nash_n(X)\cong \mathrm{Bl}_{\mathcal O_X\oplus \Omega_f^{(n)}}(X) \cong \mathrm{Bl}_{\Omega_f^{(n)}}(X)=\Nash_n(X/\Spec k),$$ 
see \cite[Proposition 1.8, Corollary 1.9]{Yas07}.

Let $f: X\to S$ be a morphism of relative dimension $e$ with $\Crit(f)$ of codimension $\geq 1$. Let $\mathcal Q(X)$ be the sheaf of total quotient rings of $X$. Let $n\in \mathbb N^*$ and $r={{e+n}\choose{e}}-1$. Let $\psi$ be the composition of the canonical morphism 
$$\bigwedge^r\Omega_f^{(n)} \to \bigwedge^r\Omega_f^{(n)}\otimes_{\mathcal O_X}\mathcal Q(X)$$ 
and a fixed isomorphism 
\begin{align}\label{non-can}
\bigwedge^r\Omega_f^{(n)}\otimes_{\mathcal O_X}\mathcal Q(X)\to \mathcal Q(X).
\end{align}
Then $\Im\psi$ is a coherent fractional ideal of $\mathcal Q(X)$, locally free of rank $1$ on an open dense subscheme $U$ of $X$. In general, there are several isomorphisms as (\ref{non-can}), so the identification of $\bigwedge^r\Omega_f^{(n)}\otimes_{\mathcal O_X}\mathcal Q(X)$ and $\mathcal Q(X)$ is not canonical, thus $\Im\psi$ is well defined up to isomorphism.

\begin{proposition}\label{Nash-f}
Let $f: X\to S$ be a morphism of $k$-varieties of relative dimension $e$ with $\Crit(f)$ of codimension $\geq 1$, let $n\in \mathbb N^*$ and $r={{e+n}\choose{e}}-1$. There is an isomorphism of $k$-varieties 
$$\Nash_n(f)\cong \mathrm{Bl}_{\bigwedge^r\Omega_f^{(n)}}(X).$$ 
Moreover, $\Nash_n(f)$ is isomorphic to the blowup of $X$ with respect to the fractional ideal $\Im\psi$.	
\end{proposition}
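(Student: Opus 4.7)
The plan is to derive both isomorphisms from the universal property of the Oneto--Zatini blowup recalled in Section 2.1, together with the result of Villamayor \cite{Vil06} that identifies the blowup at a coherent sheaf of generic rank $r$ with the blowup at its top exterior power.

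By Lemma \ref{genrank}, $\Omega_f^{(n)}$ is locally free of constant rank $r$ on a dense open $U\subseteq X$, so $\bigwedge^r\Omega_f^{(n)}$ is locally free of rank $1$ on $U$, and both $\mathrm{Bl}_{\Omega_f^{(n)}}(X)$ and $\mathrm{Bl}_{\bigwedge^r\Omega_f^{(n)}}(X)$ are well-defined modifications of $X$ in the sense of Oneto--Zatini. Recall that a modification $h:Y\to X$ factors uniquely through $\mathrm{Bl}_{\mathcal M}(X)$ if and only if $(h^*\mathcal M)/\mathrm{Tor}(h^*\mathcal M)$ is locally free. Hence the isomorphism $\Nash_n(f)\cong \mathrm{Bl}_{\bigwedge^r\Omega_f^{(n)}}(X)$ reduces to the following equivalence for any modification $h:Y\to X$: the sheaf $(h^*\Omega_f^{(n)})/\mathrm{Tor}$ is locally free of rank $r$ if and only if $(h^*\bigwedge^r\Omega_f^{(n)})/\mathrm{Tor}$ is locally free of rank $1$. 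The implication $(\Rightarrow)$ is immediate, since forming the top exterior power of a locally free sheaf of rank $r$ yields a locally free sheaf of rank $1$, and on the dense open where $h^*\Omega_f^{(n)}$ is already torsion-free this is compatible with the quotients. The reverse implication $(\Leftarrow)$ is the substantive statement, and I would invoke \cite[Theorem or Proposition]{Vil06} (Villamayor's presentation lemma for torsion-free sheaves via their top Fitting/determinantal quotient) as a black box. This gives the first isomorphism.

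For the second assertion, note that the fixed isomorphism (\ref{non-can}) realises $\Im\psi$ as the image of $\bigwedge^r\Omega_f^{(n)}$ inside the torsion-free $\mathcal O_X$-module $\mathcal Q(X)$. Therefore the natural surjection $\bigwedge^r\Omega_f^{(n)}\twoheadrightarrow\Im\psi$ factors through the torsion-free quotient, and on $U$ both sides are invertible and the map is an isomorphism; this forces $\Im\psi \cong \bigwedge^r\Omega_f^{(n)}/\mathrm{Tor}$. A modification $h:Y\to X$ thus trivializes $h^*\bigwedge^r\Omega_f^{(n)}$ modulo torsion if and only if it trivializes $h^*\Im\psi$ modulo torsion, so the universal property gives $\mathrm{Bl}_{\bigwedge^r\Omega_f^{(n)}}(X)\cong \mathrm{Bl}_{\Im\psi}(X)$, and for a coherent fractional ideal the Oneto--Zatini construction recovers the classical ideal blowup.

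The main obstacle, and the only point that is not a formal manipulation of the universal property, is the reverse direction of the rank-$r$/rank-$1$ equivalence in the first paragraph; this is where the input from Villamayor's result is essential. Everything else consists in checking that taking the torsion-free quotient, pulling back along $h$, and passing to the top exterior power interact compatibly on the dense open $U$ where $\Omega_f^{(n)}$ is already locally free of the correct rank.
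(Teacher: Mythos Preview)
Your proposal is correct in spirit but takes a more elaborate route than the paper. The paper's proof is a one-line citation: both assertions are exactly the content of \cite[Theorem 3.1]{OZ91} applied to the sheaf $\mathcal M=\Omega_f^{(n)}$, so there is nothing to prove beyond checking the hypotheses (which is Lemma \ref{genrank}). What you do instead is essentially reconstruct the argument behind that theorem: you unpack the universal property of $\mathrm{Bl}_{\mathcal M}(X)$, reduce to the rank-$r$/rank-$1$ equivalence for $h^*\Omega_f^{(n)}$ versus $h^*\bigwedge^r\Omega_f^{(n)}$ modulo torsion, and then outsource the nontrivial direction to Villamayor \cite{Vil06}. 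This is a legitimate alternative derivation, and it has the merit of making visible exactly where the substance lies (the $(\Leftarrow)$ implication). On the other hand, your citation of Villamayor is imprecise (``Theorem or Proposition''), and since Oneto--Zatini already packaged both parts of the proposition together, citing them directly is both cleaner and historically more accurate. If you keep your version, you should pin down the exact statement in \cite{Vil06} you are invoking for the reverse implication.
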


\begin{proof}
This is a direct application of Oneto-Zatini's result \cite[Theorem 3.1]{OZ91} to the $k$-variety $X$ and the sheaf $\mathcal M=\Omega_f^{(n)}$.	
\end{proof}


\subsection{Higher Jacobian ideals of regular functions}\label{Sec2.2}
Let $X$ be a scheme, and let $\mathcal M$ be a coherent $\mathcal O_X$-module. For each non-negative integer $i$,  the {\it $i$-th Fitting ideal sheaf of $\mathcal M$}, denoted by $\Fitt_i(\mathcal M)$, is defined as follows. We take an open covering $X= \bigcup _\lambda U_\lambda$ such that for each $\lambda$, $\mathcal M|_{U_\lambda}$ admits a free presentation:
$$\begin{CD}
    \mathcal O_{U_\lambda }^{N'} @>\psi_\lambda>> \mathcal O_{U_\lambda }^N @>\theta>> \mathcal M|_{U_\lambda} @>>> 0.
    \end{CD}
$$
For each $\lambda$, we define $\Fitt_i(\mathcal M)|_{U_\lambda}\subset \mathcal O _{U_\lambda}$ to be the ideal sheaf generated by $(N-i)\times(N-i)$-minors of the matrix representing $\psi_\lambda$. We see that these ideal sheaves glue together to give an ideal sheaf on the entire scheme $X$; we define $\Fitt_i(\mathcal M)$ to be this ideal sheaf. We denote by $\mathcal K$ the ideal sheaf glued from the kernel of $\theta$ varying $\lambda$.

\begin{remark}
Similarly, we can define Fitting ideals $\Fitt_i (\mathcal M)$ of a coherent sheaf $\mathcal M$ on a complex analytic space $X$.
\end{remark}

We discuss more on the local setting. Let $U$ be an affine open subscheme of $X$, $M=\mathcal M(U)$, $R=\mathcal O_X(U)$ and $K=\mathcal K(U)\subseteq R^{N'}$. Let $\{e_1,\dots,e_N\}$ be the canonical basis of $R^N$, and $\{v_1,\dots,v_N\}$ a system of generators of $M$. Let $\theta: R^N\to M$ be the $R$-homomorphism defined by $\theta(e_i)=v_i$, $1\leq i\leq N$. Let $\big\{(a_{1j},\dots,a_{Nj}) \mid 1\leq j\leq N' \big\}$ be a system of generators of $K$, and let 
$$A:=\big(a_{ij}\big)_{1\leq i\leq N, 1\leq j\leq N'}.$$ 
Then $A$ is called a {\it relation matrix of $M$ with respect to $\{v_1,\dots,v_N\}$}. Then $\mathrm{Fitt}_i(M):=\Fitt_i(\mathcal M)(U)$ is generated by all $((N-i)\times (N-i))$-minors of $A$, which is independent of the choice of $A$ and $\{v_1,\dots,v_N\}$ (cf. \cite[Lemma D.1, Lemma D.2]{Kun86}).

Let $X$ be a smooth algebraic $k$-variety of dimension $d$, and let $f: X\to \mathbb A_k^1$ be a non-constant regular function. Then $\Omega_f^{(n)}$ is a coherent $\mathcal O_X$-module locally free of constant rank $r={{d-1+n}\choose{d-1}}-1$ on an open subset of $X$ (cf. Lemma \ref{genrank}). 

\begin{definition}
The $r$-th Fitting ideal $\Fitt_r(\Omega_f^{(n)})$ of the $\mathcal O_X$-module $\Omega_f^{(n)}$ is called the {\it $n$-th Jacobian ideal of $f$} and denoted by $\JI_n(f)$.	
\end{definition}

Let us give an explicit description of $\JI_n(f)$ in the affine case $X=\Spec R$ with $R=k[x]=k[x_1,\dots,x_d]$. This case is also enough for results and applications we will mention. 

To an $f\in R$ and an $n\in \mathbb N^*$ we associate a matrix described as follows
$$\Jac_n(f):=\left(r_{\beta,\alpha}\right)_{0\leq |\beta|\leq n-1, 1\leq |\alpha|\leq n},$$
where the ordering for row and column indices is graded lexicographical, 
\begin{equation}\label{matrixJac}
r_{\beta,\alpha}=r_{\beta,\alpha}(f):=
\begin{cases}
0 & \text{if}\ \ \alpha_i<\beta_i \ \ \text{for some}\ 1\leq i\leq d\\
0 & \text{if} \ \ \alpha=\beta\\
\frac{\partial^{\alpha-\beta}f}{(\alpha-\beta)!} & \text{if}\ \ \alpha> \beta,
\end{cases}
\end{equation}
using Notation \ref{notation}. Clearly, $\Jac_n(f)$ is a matrix of type ${{d-1+n}\choose{d}}\times \big({{d+n}\choose{d}}-1\big)$ with entries in $R$.

\begin{definition}\label{bdef}
For $f\in R$, the matrix $\Jac_n(f)$ is called the {\it Jacobian matrix of order $n$ of $f$}, or the {\it $n$-th Jacobian matrix of $f$}. 
\end{definition} 

\begin{remark}\label{versions-Jac}
There are a few versions of higher Jacobian matrix which are slightly different one another. Our definition above follows the one adopted in \cite{BD20}. Another version considered in \cite{Dua17,HMYZ23} differs in that  the diagonal entries \(r_{\alpha,\alpha}\) are \(f\) instead of \(0\). These two versions coincide modulo \(f\).  The one in \cite{BJNB19}, which the authors call the {\it Jacobi-Taylor matrix}, has one extra column by allowing \(|\alpha|=0\). 
\end{remark}

Let us consider the ring homomorphism $k[t]\to R$ which maps $t$ to $f$. Let $I$ be the kernel of the diagonal homomorphism $R\otimes_{k[t]}R\to R$, which is an $R$-module via the homomorphism $R\to R\otimes_{k[t]}R$ given by $\xi \mapsto \xi\otimes 1$. By Lemma \ref{genrank}, $\Omega_f^{(n)}=I/I^{n+1}$ is an $R$-module of generic rank $r={{d-1+n}\choose{d-1}}-1$. For $1\leq i\leq d$, put $\delta x_i:=1\otimes x_i-x_i\otimes 1$, and for $\alpha=(\alpha_1,\dots,\alpha_d)$, put $(\delta x)^{\alpha}:=\prod_{i=1}^d(\delta x_i)^{\alpha_i}$. Then $I=\big\langle \delta x_1,\dots,\delta x_d \big\rangle$ and $\{[(\delta x)^{\alpha}] \mid 1\leq |\alpha|\leq n\}$ is a system of generators of the $R$-module $\Omega_f^{(n)}$. Let $\{e_{\alpha} \mid 1\leq |\alpha|\leq n\}$ be the canonical $R$-basis of $R^{{{d+n}\choose{d}}-1}$, using graded lexicographical ordering for indices $\alpha$ in $\mathbb N^d$. Consider the homomorphism of $R$-modules
$$\theta: R^{{{d+n}\choose{d}}-1}\to \Omega_f^{(n)}$$
defined by 
$$\theta(e_{\alpha})=[(\delta x)^{\alpha}].$$ 

\begin{proposition}\label{relmatrix}
With the previous notation, for $f\in R$, the transpose of $\Jac_n(f)$ is a relation matrix of $\Omega_f^{(n)}$ with respect to $\{[(\delta x)^{\alpha}] \mid 1\leq |\alpha|\leq n\}$. As a consequence, $\JI_n(f)$ is the ideal of $R$ generated by all the maximal minors of the matrix $\Jac_n(f)$.
\end{proposition}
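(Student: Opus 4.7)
The plan is to leverage the explicit computation already carried out in Example \ref{keyexample}. That example identifies $\Omega_{g\circ f}^{(n)}$ in the situation $X=\Spec R$, $S=\mathbb A_k^1$, $B=\Spec k$ with the free $R$-module $R^{\binom{d+n}{d}-1}$ with basis $\{[(x'-x)^\alpha]\mid 1\le|\alpha|\le n\}$, and computes $\Psi_n$ explicitly on the generators $(f(x')-f(x))\otimes[(x'-x)^\beta]$ of $\mathcal K\otimes_R\mathcal P_{g\circ f}^{n-1}$. Combined with the exact sequence of Lemma \ref{exact}, this gives $\Omega_f^{(n)}$ as the cokernel of a map whose columns (indexed by multi-indices $\beta$ with $0\le|\beta|\le n-1$) are precisely the images $\Psi_n((f(x')-f(x))\otimes[(x'-x)^\beta])$. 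So the first step is to translate between the $[(x'-x)^\alpha]$ basis (used in the example) and the $[(\delta x)^\alpha]$ basis (used in the proposition) by noting that $\delta x_i=1\otimes x_i-x_i\otimes 1$ is exactly the image of $x'_i-x_i$ under the ring map $R\otimes_k R\to R\otimes_{k[t]}R$, and hence $\Theta_n$ identifies the two bases.

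Next I would read off the relation matrix. The $\beta$-th column has $\alpha$-entry equal to $\frac{\partial^{\alpha-\beta}f}{(\alpha-\beta)!}$ whenever $\alpha>\beta$, and $0$ otherwise (the cases $\alpha=\beta$ and $\alpha\not\ge\beta$ both yield $0$, the first trivially and the second because the relevant term does not appear in the Taylor expansion). Comparing with \eqref{matrixJac}, these are precisely the entries $r_{\beta,\alpha}$ of $\Jac_n(f)$, so the relation matrix is $\Jac_n(f)^T$. This establishes the first assertion of the proposition.

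For the second assertion, I need to identify $\Fitt_r(\Omega_f^{(n)})$ with the ideal of maximal minors of $\Jac_n(f)$. By Lemma \ref{genrank}, $r=\binom{d-1+n}{d-1}-1$, and the presentation matrix $\Jac_n(f)^T$ has size $\big(\binom{d+n}{d}-1\big)\times\binom{d-1+n}{d}$. The Fitting ideal is generated by the $(N-r)\times(N-r)$-minors where $N=\binom{d+n}{d}-1$; using Pascal's identity
\[
\binom{d+n}{d}=\binom{d-1+n}{d-1}+\binom{d-1+n}{d},
\]
one computes $N-r=\binom{d-1+n}{d}$, so $\Fitt_r$ is indeed generated by the maximal (square) minors of $\Jac_n(f)^T$. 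Since minors are invariant under transposition, these are the maximal minors of $\Jac_n(f)$.

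The main obstacle is not conceptual but notational: one must carefully confirm that the indexing of rows and columns (by graded lexicographic order on multi-indices $\alpha$, $\beta$) and the sign conventions in the Taylor expansion line up between Example \ref{keyexample} and Definition \ref{bdef}, and that the transpose appears on the correct side. Once the identification of bases is fixed, the matching of entries and the Pascal-identity dimension count reduce the proof to a routine verification.
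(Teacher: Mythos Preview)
Your proposal is correct and follows essentially the same approach as the paper: both invoke Example~\ref{keyexample} (together with Lemma~\ref{exact}) to identify the generators of $\ker(\theta)$, match them entrywise against the definition~\eqref{matrixJac} of $\Jac_n(f)$, and conclude. Your explicit Pascal-identity check that $N-r=\binom{d-1+n}{d}$ is a helpful addition that the paper leaves implicit.
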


\begin{proof}
Consider the homomorphism $\Theta_n$ defined in (\ref{Theta}) for the case $f: \mathbb A_k^d\to\mathbb A_k^1$ and $g: \mathbb A_k^1\to \Spec k$. Via the natural isomorphism $\Omega_{g\circ f}^{(n)}\stackrel{\cong}{\to} R^{{{d+n}\choose{d}}-1}$ mapping $[(x'-x)^{\alpha}]$ to $e_{\alpha}$, $\Theta_n$ is nothing but $\theta$. Again, via the previous isomorphism, it is computed in Example \ref{keyexample} that 
$$\bigg\{\sum_{1\leq |\alpha|\leq n,\ \! \alpha> \beta}\frac{\partial^{\alpha-\beta}f(x)}{(\alpha-\beta)!}e_{\alpha} \mid  0\leq |\beta|\leq n-1 \bigg\}=\bigg\{\sum_{1\leq |\alpha|\leq n}r_{\beta,\alpha}e_{\alpha} \mid  0\leq |\beta|\leq n-1 \bigg\}$$
is a system of generators of $\ker(\theta)$ as an $R$-module. This proves that the transpose of $\Jac_n(f)$ is a relation matrix of $\Omega_f^{(n)}$ with respect to $\{[(\delta x)^{\alpha}] \mid 1\leq |\alpha|\leq n\}$.
\end{proof}

\begin{example}\label{ex-Jac_2}
For $f\in R$, we have
\begin{equation*}
\Jac_2(f)=
\begin{pmatrix}
\frac{\partial f}{\partial x_1} & \frac{\partial f}{\partial x_2} & \cdots & \frac{\partial f}{\partial x_d} & \frac 1 2 \frac{\partial^2 f}{\partial x_1^2} & \frac{\partial^2 f}{\partial x_1\partial x_2} & \cdots & \frac{\partial^2 f}{\partial x_1\partial x_d} & \frac 1 2 \frac{\partial^2 f}{\partial x_2^2} & \cdots\\
0 & 0 & \cdots & 0 & \frac{\partial f}{\partial x_1} & \frac{\partial f}{\partial x_2} & \cdots & \frac{\partial f}{\partial x_d} & 0 & \cdots\\
0 & 0 & \cdots & 0 & 0 & \frac{\partial f}{\partial x_1} & \cdots & 0 & \frac{\partial f}{\partial x_2}& \cdots\\
\vdots & \vdots & \vdots & \vdots & \vdots & \vdots & \vdots & \vdots & \vdots & \vdots\\
0 & 0 & \cdots & 0 & 0 & 0 & \cdots & \frac{\partial f}{\partial x_1}& 0 & \cdots
\end{pmatrix}.
\end{equation*}
More particularly, for $f(x_1,x_2)=x_1^3-x_2^2$ we have
\begin{equation*}
\Jac_2(f)=\begin{pmatrix}
3x_1^2 & -2x_2 & 3x_1 & 0 & -1\\
0 & 0 & 3x_1^2 & -2x_2 & 0\\
0 & 0 & 0 & 3x_1^2 & -2x_2
\end{pmatrix}
\end{equation*}
and
$$\JI_2(f)=\big\langle x_1^6,\ x_1^4x_2,\ x_1^2x_2^2,\ x_2^3,\ 4x_1x_2^2-3x_1^4\big\rangle.$$
\end{example}

\begin{remark}
We have a more direct way to show that the row vectors of $\Jac_n(f)$ are elements the kernel of $\theta$. Indeed, let $r_{\beta}$ denote the $\beta$-row of $\Jac_n(f)$, namely
$$r_{\beta}=(r_{\beta,\alpha})_{1\leq |\alpha|\leq n}=\sum_{1\leq |\alpha|\leq n}r_{\beta,\alpha}e_{\alpha}\ \in R^{{{d+n}\choose{d}}-1}.$$
Then, we have 
\begin{equation*}
\begin{aligned}
\theta(r_{\beta})&=\sum_{1\leq |\alpha|\leq n,\ \! \alpha> \beta}\frac{\partial^{\alpha-\beta}f(x)}{(\alpha-\beta)!}[(\delta x)^{\alpha}]\\
&=\bigg[(\delta x)^{\beta}\sum_{1\leq |\alpha|\leq n,\ \! \alpha> \beta}\bigg(\frac{\partial^{\alpha-\beta}f(x)}{(\alpha-\beta)!}\otimes 1\bigg) \cdot (\delta x)^{\alpha-\beta}\bigg]\\
&=\bigg[(\delta x)^{\beta}\sum_{1\leq |\gamma|\leq n-|\beta|,\ \! \gamma> 0}\frac{\partial^{\gamma}f(x\otimes 1)}{\gamma!} \cdot (\delta x)^{\gamma}\bigg].
\end{aligned}
\end{equation*}
Note that $[(\delta x)^{\beta+\gamma}]=0$ in $\Omega_f^{(n)}$ for $|\beta|+|\gamma|>n$. Thus, using Taylor's expansion,
\begin{equation*}
\begin{aligned}
\theta(r_{\beta})&=\bigg[(\delta x)^{\beta}\sum_{\gamma> 0}\frac{\partial^{\gamma}f(x\otimes 1)}{\gamma!} \cdot (\delta x)^{\gamma}\bigg]\\
&=\bigg[(\delta x)^{\beta}\sum_{\gamma\geq 0}\frac{\partial^{\gamma}f(x\otimes 1)}{\gamma!} \cdot (\delta x)^{\gamma}-(\delta x)^{\beta}f(x\otimes 1)\bigg]\\
&=\big[(\delta x)^{\beta}\big(f(1\otimes x)-f(x\otimes 1)\big)\big]\\
&=\big[(\delta x)^{\beta}\big(1\otimes f-f\otimes 1\big)\big]\\
&=0.
\end{aligned}
\end{equation*}
Furthermore, the rank of $\Jac_n(f)$ is ${{d-1+n}\choose{d}}$, which is equal to ${{d+n}\choose{d}}-1-r$. This is only enough to show that $\JI_n(f)$ is isomorphic as a fractional ideal to the ideal of $R$ generated by all the maximal minors of the matrix $\Jac_n(f)$, which is weaker than the statement of Proposition \ref{relmatrix}.
\end{remark}

\begin{proposition}\label{Jn-prop}
Let $f$ be a non-constant regular function on a smooth $d$-dimensional $k$-variety $X$, and let $n\in \mathbb N^*$. Then $\Nash_n(f)$ is isomorphic to the blowup of $X$ with respect to $\JI_n(f)$.
\end{proposition}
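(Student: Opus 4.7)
The plan is to chain together three identifications of blowups, combining Proposition~\ref{Nash-f}, Proposition~\ref{relmatrix}, and a theorem of Villamayor~\cite{Vil06}.

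Since the blowup construction is local on $X$, I would first reduce to an affine chart $X = \Spec R$. Lemma~\ref{genrank} applied to $f: X \to \mathbb A_k^1$ (of relative dimension $e = d-1$) shows that $\Omega_f^{(n)}$ is generically locally free of constant rank $r = \binom{d-1+n}{d-1} - 1$. Proposition~\ref{Nash-f} then yields
\[
\Nash_n(f) \cong \mathrm{Bl}_{\bigwedge^r \Omega_f^{(n)}}(X) \cong \mathrm{Bl}_{\Im \psi}(X),
\]
where $\Im \psi$ is the fractional ideal in $\mathcal Q(X)$ obtained as the image of $\bigwedge^r \Omega_f^{(n)}$ under any chosen isomorphism $\bigwedge^r \Omega_f^{(n)} \otimes_{\mathcal O_X} \mathcal Q(X) \cong \mathcal Q(X)$.

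Next, Proposition~\ref{relmatrix} provides a local free presentation of $\Omega_f^{(n)}$ with relation matrix $\Jac_n(f)^T$ and shows that $\JI_n(f) = \Fitt_r(\Omega_f^{(n)})$ is the ideal generated by the maximal minors of $\Jac_n(f)$. It then remains to identify the blowup at the fractional ideal $\Im \psi$ with the blowup at the ordinary ideal $\JI_n(f) \subseteq \mathcal O_X$. This is precisely the content of Villamayor's theorem \cite{Vil06}, which asserts that for a coherent sheaf $\mathcal M$ of generic rank $r$ on a reduced Noetherian scheme, the blowup $\mathrm{Bl}_{\mathcal M}$ is canonically isomorphic to the blowup at the Fitting ideal $\Fitt_r(\mathcal M)$.

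The main technical point I expect to require care is that $\Im \psi$ depends on the chosen trivialization $\bigwedge^r \Omega_f^{(n)} \otimes \mathcal Q(X) \cong \mathcal Q(X)$: two such choices differ locally by a unit in $\mathcal Q(X)$, so $\Im \psi$ differs from $\JI_n(f)$ only by multiplication by a locally principal invertible factor, which leaves the blowup unchanged. Assembling the three isomorphisms yields the desired conclusion $\Nash_n(f) \cong \mathrm{Bl}_{\JI_n(f)}(X)$.
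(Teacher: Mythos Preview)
Your proposal is correct and follows essentially the same route as the paper: invoke Proposition~\ref{Nash-f} to identify $\Nash_n(f)$ with the blowup at the fractional ideal $\Im\psi$, use Proposition~\ref{relmatrix} to describe $\JI_n(f)$ via maximal minors of $\Jac_n(f)$, and then appeal to Villamayor's result \cite[Proposition 2.5, Corollary 2.6]{Vil06} to conclude that $\JI_n(f)$ and $\Im\psi$ are isomorphic as fractional ideals (your ``differ by a locally principal invertible factor'' is exactly this), so their blowups agree. The paper's proof is just a terser version of what you wrote.
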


\begin{proof}
By Proposition \ref{relmatrix} as well as \cite[Proposition 2.5, Corollary 2.6]{Vil06}, $\JI_n(f)$ and $\Im\psi$ mentioned in Proposition \ref{Nash-f} are isomorphic as fractional ideals. Thus $\Nash_n(f)$ is isomorphic to the blowup of $X$ with respect to $\JI_n(f)$.	
\end{proof}

\begin{proposition}\label{Jn-commute}
Let $f$ be a regular function on a smooth $d$-dimensional $k$-variety $X$. 
Let $\varphi: X' \to X$ be an \'etale morphism. Then, we have $\varphi^{-1}\JI_n(f)=\JI_n(f\circ \varphi)$. Similarly for the case where $f$ is a holomorphic function on a complex manifold $X$ and $\varphi: X'\to X$ is a local isomorphism of complex manifolds.
\end{proposition}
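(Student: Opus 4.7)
Recall that $\JI_n(f) = \Fitt_r(\Omega_f^{(n)})$ with $r = \binom{d-1+n}{d-1} - 1$, and that Fitting ideals commute with arbitrary base change. The statement therefore reduces to producing a canonical isomorphism of coherent $\mathcal O_{X'}$-modules
\[
\varphi^* \Omega_f^{(n)} \xrightarrow{\sim} \Omega_{f \circ \varphi}^{(n)};
\]
applying $\Fitt_r$ to both sides then yields $\varphi^{-1}\JI_n(f) = \JI_n(f \circ \varphi)$.

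For this isomorphism, I would compare two ideal sheaves on $X' \times_S X'$: the ideal $\mathcal I_{f\circ\varphi}$ of the diagonal $\Delta_{f\circ\varphi}(X')$, and the pullback ideal $\mathcal K := (\varphi\times\varphi)^*\mathcal I_f$, which cuts out $X' \times_X X'$. From the commutative square
\[
\begin{CD}
X' @>\Delta_{f\circ\varphi}>> X' \times_S X' \\
@V\varphi VV @VV\varphi\times\varphi V \\
X @>\Delta_f>> X \times_S X
\end{CD}
\]
one has $\mathcal K \subset \mathcal I_{f\circ\varphi}$, since $\Delta_{f\circ\varphi}(X') \subset (\varphi\times\varphi)^{-1}(\Delta_f(X)) = X' \times_X X'$. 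The crucial use of étaleness is to promote this inclusion to an equality on a neighborhood of the diagonal: because $\varphi$ is étale (resp.\ a local biholomorphism), its own diagonal $\Delta_\varphi: X' \hookrightarrow X' \times_X X'$ is both open and closed, so we may decompose $X' \times_X X' = \Delta_{f\circ\varphi}(X') \sqcup Z$ with $Z$ closed in $X' \times_S X'$. Setting $U := (X' \times_S X') \setminus Z$ gives an open neighborhood of the diagonal on which $\mathcal K|_U$ and $\mathcal I_{f\circ\varphi}|_U$ cut out the same closed subscheme and therefore agree as ideal sheaves.

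Passing to quotients by the $(n+1)$-st power yields an isomorphism $(\mathcal K/\mathcal K^{n+1})|_U \xrightarrow{\sim} (\mathcal I_{f\circ\varphi}/\mathcal I_{f\circ\varphi}^{n+1})|_U$. Pulling back along $\Delta_{f\circ\varphi}: X' \to U$ and using the commutativity of the square (which gives $\Delta_{f\circ\varphi}^* \circ (\varphi\times\varphi)^* = \varphi^* \circ \Delta_f^*$), the left-hand side becomes $\varphi^*\Omega_f^{(n)}$ and the right-hand side becomes $\Omega_{f\circ\varphi}^{(n)}$, producing the sought isomorphism. The one genuine technical point is the identification $\mathcal K|_U = \mathcal I_{f\circ\varphi}|_U$; this is exactly where unramifiedness of $\varphi$---equivalently, the vanishing of $\Omega_\varphi^{(m)}$ for all $m \geq 1$---is indispensable, since without it $\mathcal K$ and $\mathcal I_{f\circ\varphi}$ would differ by higher-order contributions from $\Omega_\varphi^{(1)}$. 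The complex analytic case goes through identically, because local biholomorphisms enjoy the same diagonal property and Fitting ideals of coherent analytic sheaves are likewise base-change compatible.
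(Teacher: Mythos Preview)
Your strategy coincides with the paper's two-line proof: invoke compatibility of Fitting ideals with pullback, and the isomorphism $\varphi^*\Omega_f^{(n)}\cong\Omega_{f\circ\varphi}^{(n)}$ for \'etale $\varphi$. The paper simply asserts the latter without argument; you attempt to supply one via the diagonal, and your key observation that $\mathcal K|_U=\mathcal I_{f\circ\varphi}|_U$ on a neighborhood $U$ of the diagonal is correct and is indeed the substance of the claim.

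However, your last step is misstated. Pulling back along $\Delta_{f\circ\varphi}$ does \emph{not} yield $\Omega_{f\circ\varphi}^{(n)}$: since $\Delta_{f\circ\varphi}^*$ tensors with $\mathcal O_{X'\times_S X'}/\mathcal I_{f\circ\varphi}$, one gets
\[
\Delta_{f\circ\varphi}^*\bigl(\mathcal I_{f\circ\varphi}/\mathcal I_{f\circ\varphi}^{n+1}\bigr)=\mathcal I_{f\circ\varphi}/\mathcal I_{f\circ\varphi}^{2},
\]
the ordinary K\"ahler differentials, and likewise on the other side. The $\mathcal O_{X'}$-module structure on $\Omega_{f\circ\varphi}^{(n)}$ comes from the first projection $p_1$, not from $\Delta$; the relation $\Delta_{f\circ\varphi}^*\circ(\varphi\times\varphi)^*=\varphi^*\circ\Delta_f^*$ therefore does not do what you want. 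To repair the argument: both quotients $\mathcal K/\mathcal K^{n+1}$ and $\mathcal I_{f\circ\varphi}/\mathcal I_{f\circ\varphi}^{n+1}$ are supported in $U$, hence agree as sheaves on $X'\times_S X'$, giving $\Omega_{f\circ\varphi}^{(n)}=(p_1')_*(\mathcal K/\mathcal K^{n+1})$. For the other identification, factor $\varphi\times\varphi=(\varphi\times\mathrm{id})\circ(\mathrm{id}\times\varphi)$, apply flat base change to the genuinely cartesian square with $X'\times_S X\to X\times_S X$ over $X'\to X$, and use that the \'etale map $\mathrm{id}\times\varphi$ restricts to an isomorphism between the $n$-th infinitesimal neighborhoods of $\Delta(X')$ and of the graph $\Gamma_\varphi\subset X'\times_S X$ (it is \'etale and bijective over $\Gamma_\varphi$ once restricted to $U$).
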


\begin{proof}
 Since $\varphi$ is \'etale, we have $\varphi^* \Omega_f^{(n)} = \Omega_{f\circ \varphi}^{(n)} $. From a basic property of Fitting ideals, we have  $\Fitt_r(\varphi^*\mathcal M) = \varphi^{-1}\Fitt_r(\mathcal M)$ for any coherent sheaf $\mathcal M$ on $X$. Similarly for the holomorphic setting.
\end{proof}

\begin{proposition}\label{JnJ1}
We have an inclusion $\JI_n(f)\subseteq \JI_1(f)^{\binom{d-2+n}{d-1}}$. In particular, $\JI_n(f)\subseteq \JI_1(f)^{3}$, if either 
    \begin{enumerate}
        \item[(i)] $d \ge 3$ and $n\ge 2$, or  
        \item[(ii)] $d =2$ and $n\ge 3$. 
    \end{enumerate}
\end{proposition}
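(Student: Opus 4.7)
My plan is to read off the statement directly from the combinatorial shape of $\Jac_n(f)$ by expanding any maximal minor via Leibniz's formula, and then to invoke Proposition~\ref{relmatrix}. The key observation is that every row of $\Jac_n(f)$ indexed by a multi-index $\beta\in\mathbb N^d$ with $|\beta|=n-1$ has all its non-zero entries lying inside $\JI_1(f)$. Indeed, by the definition (\ref{matrixJac}), a non-zero entry $r_{\beta,\alpha}$ forces $\alpha\geq\beta$ componentwise with $\alpha\neq\beta$, while $1\leq|\alpha|\leq n$ and $|\beta|=n-1$ force $|\alpha-\beta|=1$; hence $r_{\beta,\alpha}=\partial f/\partial x_i$ for some $i$, which lies in $\JI_1(f)$.

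Next I would count: by stars-and-bars, the number of such ``special'' rows is exactly $\binom{d-2+n}{d-1}$, matching the target exponent precisely. Given any maximal minor of $\Jac_n(f)$, Leibniz writes it as a signed sum of products of entries, one per row. Within each product, the $\binom{d-2+n}{d-1}$ factors coming from the rows with $|\beta|=n-1$ all lie in $\JI_1(f)$ (or are zero, which does no harm), while the remaining factors are arbitrary elements of the ring. Thus each product lies in $\JI_1(f)^{\binom{d-2+n}{d-1}}$, and hence so does every maximal minor. By Proposition~\ref{relmatrix}, these minors generate $\JI_n(f)$, giving the first inclusion.

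The cases (i) and (ii) then reduce to the arithmetic check $\binom{d-2+n}{d-1}\geq 3$. For $d\geq 3$ and $n\geq 2$ we have $\binom{d-2+n}{d-1}\geq\binom{d}{d-1}=d\geq 3$; for $d=2$ and $n\geq 3$ we have $\binom{d-2+n}{d-1}=\binom{n}{1}=n\geq 3$. In both cases the first part yields $\JI_n(f)\subseteq\JI_1(f)^3$.

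There is essentially no obstacle: the argument is pure determinantal bookkeeping, and the stars-and-bars count of ``bottom-tier'' rows matches the claimed exponent exactly, so no slack is lost. The one thing worth stating carefully is the convention that $r_{\beta,\alpha}=0$ whenever $\alpha\not\geq\beta$ or $\alpha=\beta$, so that vanishing entries contribute harmlessly to the Leibniz expansion and do not spoil the ideal-membership count for the non-vanishing summands.
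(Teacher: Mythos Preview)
Your proposal is correct and follows essentially the same approach as the paper: both arguments identify that each row of $\Jac_n(f)$ with $|\beta|=n-1$ has only first partials or zeroes as entries, count these rows as $\binom{d-2+n}{d-1}$, and conclude via the Leibniz expansion that every maximal minor lies in $\JI_1(f)^{\binom{d-2+n}{d-1}}$. Your arithmetic for case~(i), using monotonicity of $\binom{m}{d-1}$ in $m$ to get $\binom{d-2+n}{d-1}\ge\binom{d}{d-1}=d\ge 3$, is a slightly cleaner variant of the paper's Pascal-identity computation, but the overall strategy is the same.
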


\begin{proof}
If $|\beta|=n-1$, then the $\beta$-row of the matrix  $\Jac_n(f)$ has only zeroes and first derivatives $\frac{\partial f}{\partial x_i}$ as its entries. Indeed, if $\alpha$ is given by $\alpha_i=\beta_i+1$ and $\alpha_j = \beta_j$ $(i\ne j)$ for some $i$, then the $(\beta,\alpha)$-entry of the matrix is $\frac{\partial f}{\partial x_i}$. If $\alpha$ is not of this form, then the $(\beta,\alpha)$-entry is zero. 
The number of $\beta$'s with $|\beta|=n-1$ is $\binom{d-2+n}{d-1}$, and hence there are $\binom{d-2+n}{d-1}$ rows as above in the matrix.
It follows that every maximal minor of  $\Jac_n(f)$ belongs to $\JI_1(f)^{\binom{d-2+n}{d-1}}$, which shows the first assertion of the proposition.

To show the second assertion, we first note that  if $r>s$, then 
\[
\binom{r}{s} \geq \binom{r-1}{s-1}\geq \cdots \geq   \binom{r-s+1}{1} =r-s+1.
\]
If $d\geq 3$ and $n \geq 2$, then 
\[
\binom{d-2+n}{d-1} =  \binom{d-3+n}{d-2} +  \binom{d-3+n}{d-1} \geq n +(n-1) = 2 n-1 \geq 3.    
\]
If $d=2$ and $n \geq 3$, then 
\[
\binom{d-2+n}{d-1} = n \geq 3.    
\]
\end{proof}


\subsection{Higher Jacobian ideals of complex analytic functions}
In this subsection, we consider the ring $\mathbb C\{x\}=\mathbb C\{x_1,\dots,x_d\}$ and $f\in \mathbb C\{x\}$ with $f(\mathbf 0)=0$, where $\mathbf 0$ is the origin of $\mathbb C^d$. Denote by $V=V(f)$ or $(V,\mathbf 0)$ the germ of the complex hypersurface singularity at $\mathbf 0$ defined by $f$. For a complex analytic function $f$, we also define $\Jac_n(f)$ and $\JI_n(f)$ similarly as in Section \ref{Sec2.2}.

\begin{lemma}\label{local-lem1}
Let $f$ be in $\mathbb C[x]$ with $f(\mathbf 0)=0$. Let $\varphi$ be an automorphism of $\mathbb C\{x\}$. Then for $n\in \mathbb N^*$,  $\varphi(\JI_n(f))=\JI_n(\varphi(f))$.
\end{lemma}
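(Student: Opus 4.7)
The plan is to deduce the identity $\varphi(\JI_n(f))=\JI_n(\varphi(f))$ from the holomorphic case of Proposition \ref{Jn-commute} proved above. The starting point is the standard dictionary between analytic local $\mathbb C$-algebras and pointed germs of complex manifolds: any $\mathbb C$-algebra automorphism $\varphi$ of $\mathbb C\{x\}$ is induced by a unique germ of biholomorphism $\Phi\colon (\mathbb C^d,\mathbf 0)\to(\mathbb C^d,\mathbf 0)$ via $\varphi(g)=g\circ\Phi$ for every $g\in\mathbb C\{x\}$. Concretely, $\Phi$ has components $(\varphi(x_1),\dots,\varphi(x_d))$, and the invertibility of its Jacobian matrix at $\mathbf 0$ follows from $\varphi$ being an automorphism. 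In particular, $\varphi(f)=f\circ\Phi$.

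With this dictionary in hand, the next step is to pick a representative of $\Phi$ as an honest biholomorphism between open neighbourhoods of $\mathbf 0$ in $\mathbb C^d$, and apply the holomorphic form of Proposition \ref{Jn-commute} to obtain the equality of ideal sheaves $\Phi^{-1}\JI_n(f)=\JI_n(f\circ\Phi)$ on (a neighbourhood of) the source. Passing to stalks at $\mathbf 0$, the pullback ideal on the left becomes the ideal of $\mathbb C\{x\}$ generated by $\{h\circ\Phi:h\in\JI_n(f)\}=\{\varphi(h):h\in\JI_n(f)\}=\varphi(\JI_n(f))$, while the stalk on the right is $\JI_n(\varphi(f))$ by the very definition of the analytic $n$-th Jacobian ideal. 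Combining these two identifications yields the desired equality.

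One minor bookkeeping point to address along the way is that $\JI_n(f)$ was originally defined as an ideal of $\mathbb C[x]$ for $f\in\mathbb C[x]$, whereas the lemma implicitly interprets it inside $\mathbb C\{x\}$. Since Fitting ideals commute with arbitrary base change, the image of the algebraic $\JI_n(f)$ under the flat extension $\mathbb C[x]\to\mathbb C\{x\}$ agrees with the analytically-defined $\JI_n(f)$, so this identification is harmless. Apart from this small translation, there is no real obstacle: the lemma is essentially a direct corollary of Proposition \ref{Jn-commute} combined with the bijective correspondence between automorphisms of $\mathbb C\{x\}$ and pointed germ biholomorphisms, and the hard work has already been carried out at the sheaf level.
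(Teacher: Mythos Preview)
Your proposal is correct and follows essentially the same approach as the paper: the paper's proof simply states that the lemma is a particular case of the local (holomorphic) version of Proposition~\ref{Jn-commute}, and your argument spells out precisely this reduction via the standard dictionary between automorphisms of $\mathbb C\{x\}$ and germs of biholomorphisms. The additional bookkeeping you include (algebraic versus analytic $\JI_n(f)$, passing to stalks) is a welcome clarification of what the paper leaves implicit.
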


\begin{proof}
This lemma is a particular case of the local version of Proposition \ref{Jn-commute}, hence it has the same method of proof.
\end{proof}

\begin{corollary}
Let $f$ be a weighted homogeneous polynomial in $\mathbb C[x]$, and $u$ in $\mathbb C\{x\}$ with $u(\mathbf 0)\not=0$. Then for any $n\in \mathbb N^*$,  $\JI_n(f)=\JI_n(uf)$.
\end{corollary}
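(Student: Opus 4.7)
The plan is to reduce the claim to Lemma \ref{local-lem1} by producing an analytic automorphism $\varphi$ of $\mathbb C\{x\}$ that satisfies $\varphi(f)=uf$ and simultaneously preserves the ideal $\JI_n(f)$ itself. Given such a $\varphi$, the lemma yields
\[
\JI_n(uf)=\JI_n(\varphi(f))=\varphi(\JI_n(f))=\JI_n(f),
\]
which is the desired equality.

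For the construction of $\varphi$: after clearing denominators I may assume the weights $w_1,\dots,w_d$ and the weighted degree $d_0$ of $f$ are positive integers, so that
\[
f(\lambda^{w_1}x_1,\dots,\lambda^{w_d}x_d)=\lambda^{d_0}f(x)
\]
holds as a polynomial identity in $\lambda$. Since $u(\mathbf 0)\neq 0$, one can pick $v\in \mathbb C\{x\}$ with $v^{d_0}=u$. I then define $\varphi:\mathbb C\{x\}\to \mathbb C\{x\}$ to be the $\mathbb C$-algebra homomorphism sending $x_i\mapsto v^{w_i}x_i$; its Jacobian matrix at $\mathbf 0$ is the invertible diagonal matrix $\mathrm{diag}(v(\mathbf 0)^{w_1},\dots,v(\mathbf 0)^{w_d})$, so $\varphi$ is a germ automorphism. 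Substituting $\lambda=v$ into the homogeneity identity gives $\varphi(f)=v^{d_0}f=uf$, as required.

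To obtain invariance of $\JI_n(f)$ under $\varphi$, I use that by Proposition \ref{relmatrix}, $\JI_n(f)$ is generated by the maximal minors of $\Jac_n(f)$. Each nonzero entry $r_{\beta,\alpha}=\partial^{\alpha-\beta}f/(\alpha-\beta)!$ of $\Jac_n(f)$ is weighted homogeneous of degree $d_0-w\cdot(\alpha-\beta)$. For any fixed choice of rows $\beta^{(1)},\dots,\beta^{(r)}$ and columns $\alpha^{(1)},\dots,\alpha^{(r)}$, every nonzero term $\prod_i r_{\beta^{(i)},\alpha^{(\sigma(i))}}$ in the signed expansion of the corresponding minor has the same weighted degree
\[
rd_0-w\cdot\Big(\sum\nolimits_i\alpha^{(i)}-\sum\nolimits_i\beta^{(i)}\Big),
\]
independent of the permutation $\sigma$; hence each maximal minor of $\Jac_n(f)$ is itself weighted homogeneous. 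Applying the homogeneity identity with $\lambda=v$ to a weighted homogeneous polynomial $g$ of degree $e$ yields $\varphi(g)=v^{e}g$, and since $v$ is a unit of $\mathbb C\{x\}$, $\varphi$ sends each generator of $\JI_n(f)$ to a unit multiple of itself. Consequently $\varphi(\JI_n(f))=\JI_n(f)$, and combining with Lemma \ref{local-lem1} finishes the proof.

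The only nontrivial piece of bookkeeping is the weighted homogeneity of the maximal minors of $\Jac_n(f)$; this is essentially a degree count, so I do not expect a real obstacle, merely the need to keep track of the column-versus-row indices carefully.
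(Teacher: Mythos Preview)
Your proof is correct and follows the same route as the paper: both construct the automorphism $\varphi$ of $\mathbb C\{x\}$ given by $x_i\mapsto (\text{unit})^{w_i}x_i$ so that weighted homogeneity forces $\varphi(f)=uf$, and then invoke Lemma~\ref{local-lem1}. The paper normalizes the weighted degree to $1$ and uses $u^{w_i}$ directly, whereas you take integer weights and a $d_0$-th root $v$ of $u$; these are the same construction up to normalization. Your additional verification that the maximal minors of $\Jac_n(f)$ are themselves weighted homogeneous, hence that $\varphi(\JI_n(f))=\JI_n(f)$, makes explicit a step the paper leaves tacit in the phrase ``the corollary follows from Lemma~\ref{local-lem1}''; your degree count for the minors is correct.
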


\begin{proof}
Let $w=(w_1,\dots,w_d)$ be the weight of $f$. Consider the automorphism $\varphi$ of $\mathbb C\{x\}$ defined by 
$$x\mapsto (u^{w_1}x_1,\dots, u^{w_d}x_d).$$
We have 
$$\varphi(f)=f(u^{w_1}x_1,\dots, u^{w_d}x_d)=uf.$$ 
Then the corollary follows from Lemma \ref{local-lem1}. 
\end{proof}

\begin{remark}
Consider $f\in \mathbb C\{x\}$ satisfying $f(\mathbf 0)=0$ and the ring homomorphism $\mathbb C\{t\}\to \mathbb C\{x\}$ sending $t$ to $f$. Let $I'$ be the kernel of the diagonal homomorphism $\mathbb C\{x\}\otimes_{\mathbb C\{t\}}\mathbb C\{x\}\to \mathbb C\{x\}$, which is an $\mathbb C\{x\}$-module due to the map $\mathbb C\{x\}\to \mathbb C\{x\}\otimes_{\mathbb C\{t\}}\mathbb C\{x\}$ sending $\xi$ to $\xi\otimes 1$. By Lemma \ref{genrank}, $I'/I^{\prime n+1}$ is an $\mathbb C\{x\}$-module of generic rank $r={{d-1+n}\choose{d-1}}-1$. Now, assume that $f$ is a polynomial in $\mathbb C[x]$ with $f(\mathbf 0)=0$, which induces a regular function $f: \mathbb A_{\mathbb C}^d\to \mathbb A_{\mathbb C}^1$. Then, similarly as \cite[Corollary 16.4.16]{Gro67}, we have $\big(\Omega_f^{(n)}\big)_{\mathbf 0}\cong I'/I^{\prime n+1}$. This also explains that some results for regular functions on varieties still hold in this setting provided $f$ is a polynomial (e.g. Lemma \ref{local-lem1}).	
\end{remark}

\begin{lemma}\label{local-lem3}
Let $f$ and $u$ be in $\mathbb C\{x\}$ such that $f(\mathbf 0)=0$ and $u(\mathbf 0)\not=0$. Then for any $n\in \mathbb N^*$,  $\langle f, \JI_n(f)\rangle=\langle f,\JI_n(uf)\rangle$.
\end{lemma}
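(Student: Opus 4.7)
The plan is to compare the higher Jacobian matrices $\Jac_n(f)$ and $\Jac_n(uf)$ modulo $f$ via a row-operation argument, and then invoke the fact that the higher Jacobian ideal is generated by the maximal minors (Proposition \ref{relmatrix}), which is insensitive to left multiplication by invertible matrices.

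First, I will apply Leibniz's rule to $\partial^{\alpha-\beta}(uf)$ for $\alpha>\beta$ and divide by $(\alpha-\beta)!$ to obtain
\[
r_{\beta,\alpha}(uf) = \sum_{0\le \delta\le \alpha-\beta} \frac{\partial^{\delta}u}{\delta!}\cdot \frac{\partial^{\alpha-\beta-\delta}f}{(\alpha-\beta-\delta)!}.
\]
The term with $\delta=\alpha-\beta$ equals $\frac{\partial^{\alpha-\beta}u}{(\alpha-\beta)!}\cdot f$, hence vanishes modulo $f$. Rewriting the remaining sum using the entries of $\Jac_n(f)$ yields, for each $\beta$ with $|\beta|\le n-1$,
\[
r_{\beta,\alpha}(uf) \equiv \sum_{\substack{\delta\ge 0\\ \beta+\delta<\alpha}} \frac{\partial^{\delta}u}{\delta!}\cdot r_{\beta+\delta,\alpha}(f)\pmod{f}.
\]
One checks that this identity extends to the cases $\alpha=\beta$ and $\alpha_i<\beta_i$ (both sides then vanish or are covered by the convention in \eqref{matrixJac}). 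Regarding the $\beta$-th row as a vector of length $\binom{d+n}{d}-1$, the displayed formula reads, row by row,
\[
r_{\beta}(uf) \equiv \sum_{\substack{\delta\ge 0\\ |\delta|\le n-1-|\beta|}} \frac{\partial^{\delta}u}{\delta!}\cdot r_{\beta+\delta}(f)\pmod{f}.
\]

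Next, I interpret this as a matrix identity $\Jac_n(uf)\equiv T\cdot \Jac_n(f)\pmod{f}$, where $T=(T_{\beta,\beta'})$ is the square matrix of size $\binom{d-1+n}{d}$ indexed by the row labels, with
\[
T_{\beta,\beta'}=\begin{cases}\dfrac{\partial^{\beta'-\beta}u}{(\beta'-\beta)!} & \text{if } \beta'\ge \beta,\\ 0 & \text{otherwise.}\end{cases}
\]
Since coordinatewise $\beta\le \beta'$ implies $\beta\le_{\mathrm{grlex}}\beta'$, the matrix $T$ is upper triangular with respect to the graded lexicographic ordering of row indices. Its diagonal entries are all equal to $u$ (take $\delta=0$), so $\det T = u^{\binom{d-1+n}{d}}$. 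Because $u(\mathbf{0})\neq 0$, $u$ is a unit in $\mathbb C\{x\}$, and hence so is $\det T$.

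Finally, the $n$-th Jacobian ideal $\JI_n(f)$ is, by Proposition \ref{relmatrix}, the ideal generated by the maximal (size $\binom{d-1+n}{d}$) minors of $\Jac_n(f)$; the same holds for $uf$. Since left multiplication of $\Jac_n(f)$ by $T$ multiplies each maximal minor by $\det T$, we obtain
\[
\JI_n(uf) + \langle f\rangle = (\det T)\cdot \JI_n(f) + \langle f\rangle = \JI_n(f) + \langle f\rangle,
\]
which is precisely $\langle f,\JI_n(f)\rangle = \langle f,\JI_n(uf)\rangle$. The only mild obstacle is bookkeeping: verifying the edge cases $\alpha\le\beta$ and confirming that the triangularity statement holds in the graded lexicographic order chosen in Definition \ref{bdef}; both are straightforward from the definitions.
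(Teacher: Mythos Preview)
Your argument is correct and follows essentially the same route as the paper: introduce the square matrix $T=(\partial^{\beta'-\beta}u/(\beta'-\beta)!)_{\beta,\beta'}$, observe that it is upper triangular with diagonal entries $u$ (hence invertible), and use the Leibniz rule to identify $T\cdot\Jac_n(f)$ with $\Jac_n(uf)$ modulo $f$, so that the maximal-minor ideals agree modulo $\langle f\rangle$. The only cosmetic differences are that the paper phrases the computation minor-by-minor (for each injection $\iota$ of row indices into column indices it compares $TM_\iota^f$ with $M_\iota^{uf}$), whereas you write a single matrix identity and then pass to minors; and that you make the reduction modulo $f$ explicit, which is in fact necessary since the $\delta=\alpha-\beta$ term in the Leibniz expansion contributes $\frac{\partial^{\alpha-\beta}u}{(\alpha-\beta)!}\,f$.
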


\begin{proof}
First, we use the convention that $\partial^{\alpha-\beta}=0$ if there is an $i$ such that $\alpha_i<\beta_i$. 
For each injection
\[
    \iota: \{\beta \mid 0\le |\beta| \le n-1 \} \to \{\alpha \mid 1\le |\alpha| \le n \},
\]
we define 
$$M^f_\iota:=\big(r_{\beta,\iota(\beta')}(f)\big)_{\beta,\beta'}.$$ 
Up to permutation of columns, this is equal to the maximal square submatrix of $\Jac_n(f)$ corresponding to the image of $\iota$. Let
\begin{align*}
    T&:=\bigg(\frac{\partial^{\beta'-\beta}u}{(\beta'-\beta)!}\bigg)_{0\leq |\beta|, |\beta'|\leq n-1},\\
    N_\iota&:=T M^f_\iota.
\end{align*}
We see that $T$ is an upper triangular matrix with every diagonal entry equal to $u$. Indeed, if $\beta > \beta'$ for the graded lexicographic order, then for some $i$, $\beta_i>\beta'_i$ and hence the $(\beta,\beta')$-entry is zero.
It follows that 
\begin{align*}
\det(N_\iota )=u^{{d-1+n}\choose{d}} \det(M^f_\iota).
\end{align*}
A direct computation shows that the $(\beta,\beta')$-entry of $N_\iota$ is 
\begin{equation*}
\sum_{0\leq |\beta''|\leq n-1}\frac{\partial^{\beta''-\beta}u}{(\beta''-\beta)!}  \frac{\partial^{\iota(\beta')-\beta''}f}{(\iota(\beta')-\beta'')!},
\end{equation*}
with the convention that $\partial^{(0,\dots,0)}f=0$. From this convention and the general Leibniz rule, this is equal to $r_{\beta,\iota(\beta')}(uf)$. 
Thus $M_\iota^{uf}= N_\iota$ and $\det (M_\iota^{uf})= \det(N_\iota)=\det (M_\iota^f)$. 
Since the ideals $\mathcal J_n(f)$ and $\mathcal J_n(uf)$ are generated by $\{\det (M_\iota^f)\}_{\iota}$ and  $\{\det (M_\iota^{uf})\}_{\iota}$ respectively, the lemma follows. 
\end{proof}

\begin{definition}
Let $f$ and $g$ be in $\mathbb C\{x\}$. Then $f$ is said to be {\it contact equivalent} to $g$ (at $\mathbf 0$) if there exist an automorphism $\varphi$ of $\mathbb C\{x\}$ and a unit $u$ in $\mathbb C\{x\}$ such that $g=u\cdot \varphi(f)$.	
\end{definition}

\begin{theorem}\label{local-thm1}
Let $f$ and $g$ be in $\mathbb C[x]$ with $f(\mathbf 0)=g(\mathbf 0)=0$. If $f$ is contact equivalent to $g$ at $\mathbf 0$, then $\mathbb C\{x\}/\langle f,\JI_n(f)\rangle$ is isomorphic to $\mathbb C\{x\}/\langle g,\JI_n(g)\rangle$ as $\mathbb C$-algebras for any $n\in \mathbb N^*$.	
\end{theorem}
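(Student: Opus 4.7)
The plan is to leverage the two lemmas \ref{local-lem1} and \ref{local-lem3} that have already been established, which between them handle the two constituents of a contact equivalence (the automorphism $\varphi$ and the unit $u$). By hypothesis, there exist an automorphism $\varphi$ of $\mathbb C\{x\}$ and a unit $u\in \mathbb C\{x\}^\times$ such that $g=u\cdot \varphi(f)$, and I would proceed by establishing the equality of ideals
\[
\varphi\bigl(\langle f, \JI_n(f)\rangle\bigr) \;=\; \langle g, \JI_n(g)\rangle
\]
in $\mathbb C\{x\}$, which together with the fact that $\varphi$ is a $\mathbb C$-algebra automorphism immediately yields the desired isomorphism of quotient algebras $\mathcal M_n(f)\cong \mathcal M_n(g)$.

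The argument proceeds in two steps. First I would apply Lemma \ref{local-lem1}, which gives $\varphi(\JI_n(f))=\JI_n(\varphi(f))$, so that
\[
\varphi\bigl(\langle f, \JI_n(f)\rangle\bigr) = \langle \varphi(f), \JI_n(\varphi(f))\rangle.
\]
Here one uses that any $\mathbb C$-algebra automorphism of $\mathbb C\{x\}$ preserves the maximal ideal, so $\varphi(f)(\mathbf 0)=0$, which makes the hypotheses of Lemma \ref{local-lem1} compatible (even though $\varphi(f)$ lies in $\mathbb C\{x\}$ rather than in $\mathbb C[x]$, note that the lemma as stated applies in exactly this setting, since $f$ is polynomial while $\varphi$ and hence $\varphi(f)$ is analytic).

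Second, I would apply Lemma \ref{local-lem3} with the element $\varphi(f)\in \mathbb C\{x\}$ (again vanishing at $\mathbf 0$) in the role of ``$f$'' and the unit $u$ in the role of ``$u$'', obtaining
\[
\langle \varphi(f), \JI_n(\varphi(f))\rangle \;=\; \langle \varphi(f), \JI_n(u\cdot \varphi(f))\rangle \;=\; \langle \varphi(f), \JI_n(g)\rangle.
\]
Since $u$ is a unit, $\varphi(f)=u^{-1}g$ generates the same ideal as $g$ modulo any ideal, so $\langle \varphi(f), \JI_n(g)\rangle = \langle g, \JI_n(g)\rangle$. Chaining these three equalities gives the claimed identity $\varphi(\langle f, \JI_n(f)\rangle) = \langle g, \JI_n(g)\rangle$, and therefore $\varphi$ descends to a $\mathbb C$-algebra isomorphism $\mathcal M_n(f)\xrightarrow{\sim}\mathcal M_n(g)$.

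There is no serious obstacle at this stage: all of the conceptual content sits in Lemmas \ref{local-lem1} and \ref{local-lem3}, so the theorem is essentially a formal consequence. The only point requiring a small amount of care is verifying that both lemmas apply in the precise form needed after one applies $\varphi$ (i.e., to an analytic, not necessarily polynomial, function), which is already covered by the stated hypotheses.
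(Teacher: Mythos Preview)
Your proof is correct and follows essentially the same approach as the paper: both arguments reduce the theorem to Lemmas \ref{local-lem1} and \ref{local-lem3}, establish the equality $\varphi(\langle f,\JI_n(f)\rangle)=\langle g,\JI_n(g)\rangle$, and then conclude via the automorphism $\varphi$. The only cosmetic difference is the order in which the two lemmas are invoked (the paper starts from $g$ and works backwards, you start from $f$ and work forwards), and your remark about checking that the lemmas apply to the analytic function $\varphi(f)$ is a point the paper leaves implicit.
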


\begin{proof}
By the hypothesis, there exists an automorphism $\varphi$ of $\mathbb C\{x\}$ and a unit $u$ in $\mathbb C\{x\}$ such that $g=u\cdot \varphi(f)$. Take any $n\in \mathbb N^*$. It follows that
\begin{align*}
\mathbb C\{x\}/\langle g,\JI_n(g)\rangle &=\mathbb C\{x\}/\langle \varphi(f),\JI_n(u\cdot \varphi(f))\rangle\\
&= \mathbb C\{x\}/\langle \varphi(f),\JI_n(\varphi(f))\rangle \quad (\text{by Lemma \ref{local-lem3}})\\
&= \mathbb C\{x\}/\langle \varphi(f),\varphi(\JI_n(f))\rangle \quad (\text{by Lemma \ref{local-lem1}})\\
&=\mathbb C\{x\}/\varphi(\langle f,\JI_n(f)\rangle).
\end{align*} 
Since $\varphi$ is an automorphism $\varphi$ of $\mathbb C\{x\}$, the well defined map
$$\mathbb C\{x\}/\langle f,\JI_n(f)\rangle \to \mathbb C\{x\}/\varphi(\langle f,\JI_n(f)\rangle)$$
which sends $h+\langle f,\JI_n(f)\rangle$ to $\varphi(h)+\varphi(\langle f,\JI_n(f)\rangle)$ is an isomorphism of $\mathbb C$-algebras.	
\end{proof}

\begin{corollary}[\cite{HMYZ23}, Conjecture 1.5]\label{conjj}
Let $f$ and $g$ be in $\mathbb C[x]$ such that $V(f)$ and $V(g)$ have an isolated singularity at $\mathbf 0$. If $f$ is contact equivalent to $g$ at $\mathbf 0$, then $\mathbb C\{x\}/\langle f,\JI_n(f)\rangle$ is isomorphic to $\mathbb C\{x\}/\langle g,\JI_n(g)\rangle$ as $\mathbb C$-algebras for any $n\in \mathbb N^*$.	
\end{corollary}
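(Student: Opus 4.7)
The plan is to deduce the corollary as an immediate specialization of Theorem~\ref{local-thm1}. The hypotheses of the corollary comprise those of the theorem together with one additional requirement, namely that $V(f)$ and $V(g)$ have isolated singularities at $\mathbf 0$. Since Theorem~\ref{local-thm1} already yields precisely the isomorphism $\mathbb C\{x\}/\langle f,\JI_n(f)\rangle\cong \mathbb C\{x\}/\langle g,\JI_n(g)\rangle$ of $\mathbb C$-algebras for every $n\in \mathbb N^*$ under strictly weaker hypotheses, the corollary follows at once.

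Consequently there is no genuine obstacle to overcome at the level of the corollary: the substantive work has already been carried out in Lemma~\ref{local-lem1}, Lemma~\ref{local-lem3}, and their assembly into Theorem~\ref{local-thm1}. What is worth emphasizing in the write-up is that Conjecture~1.5 of \cite{HMYZ23} was stated under the isolated-singularity assumption, whereas our argument shows this assumption to be superfluous. In other words, the proof I would write consists of a single sentence citing Theorem~\ref{local-thm1} and observing that the isolated-singularity hypothesis plays no role, thereby both confirming the conjecture of Hussain--Ma--Yau--Zuo and extending it.
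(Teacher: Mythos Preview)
Your proposal is correct and matches the paper's treatment: the corollary is stated without a separate proof and the paper remarks immediately afterward that Theorem~\ref{local-thm1} is strictly stronger, the isolated-singularity hypothesis being unnecessary. There is nothing to add.
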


We remark that Conjecture 1.5 of \cite{HMYZ23} is proved by its authors for $d=n=2$, see \cite[Theorem A]{HMYZ23}. Furthermore, Theorem \ref{local-thm1} is also stronger than this conjecture since it does not need the condition that $f$ and $g$ have an isolated singularity at $\mathbf 0$.


\section{Motivic zeta functions of regular functions}

\subsection{Motivic zeta functions}\label{Sec3.1}
For $m\in \mathbb N^*$, we denote by $\mu_m$ the group scheme of $n$-th roots of unity $\Spec\left(k[\tau]/(\tau^m-1)\right)$. These schemes together with the mappings $\mu_{ml}\to \mu_m$ given by $\xi\mapsto \xi^l$ form a projective system, whose limit is denoted by $\hat{\mu}$. A {\it good} $\mu_m$-action on a $k$-variety $X$ is a group action of $\mu_m$ on $X$ such that every orbit is contained in an affine subvariety, a {\it good} $\hat{\mu}$-action is an action of $\hat{\mu}$ factoring via a good $\mu_m$-action. 

Let $S$ be a $k$-variety, with trivial $\mu_m$-action. The {\it $\mu_m$-equivariant Grothendieck group} $K_0^{\mu_m}(\Var_S)$ is the quotient of the free abelian group generated by the $\mu_m$-equivariant isomorphism classes $[X\to S,\sigma]$ by the subgroup generated by
$$[X\to S,\sigma]-[Y\to S,\sigma|_Y]-[X\setminus Y\to S,\sigma|_{X\setminus Y}]$$
for any invariant Zariski closed subvariety $Y$ of $X$, and by 
\begin{align*}
[X\times_k\mathbb A_k^n\to S,\sigma]-[X\times_k\mathbb A_k^n\to S,\sigma']
\end{align*}
for $\sigma$ and $\sigma'$ lifted from the same $\mu_m$-action on $X$. There is a natural structure of a commutative ring with unity on $K_0^{\mu_m}(\Var_S)$ due to fiber product over $S$, with $\mu_m$-action on the fiber product induced from the diagonal one. Let $\L$ be the class of the trivial line bundle over $S$. We define $\mathscr M_S^{\mu_m}:=K_0^{\mu_m}(\Var_S)[\L^{-1}]$, $K_0^{\hat \mu}(\Var_S):=\varinjlim K_0^{\mu_m}(\Var_S)$, and $\mathscr M_S^{\hat \mu}:= \varinjlim \mathscr M_S^{\mu_m}=K_0^{\hat \mu}(\Var_S)[\L^{-1}]$. Forgetting actions recovers the (classical) Grothendieck ring $\mathscr M_S$. 

To a $k$-variety $X$ and $m\in \mathbb N$ corresponds the $k$-scheme $\mathscr L_m(X)$ that represents the functor 
$$K\mapsto \Mor_{k-\text{schemes}}(\Spec(K[t]/\langle t^{m+1}\rangle K[t]),X)$$
from the category of $k$-algebras to the category of sets. For integers $l\geq m\geq 0$, the truncation map $k[t]/\langle t^{l+1}\rangle\to k[t]/\langle t^{m+1}\rangle$ induces a morphism of $k$-schemes
$$\pi_m^l: \mathscr{L}_l(X)\to \mathscr L_m(X).$$
If $X$ is smooth of pure dimension $d$, $\pi_m^l$ is a locally trivial fibration with fiber $\mathbb A_k^{(l-m)d}$. Let $\mathscr L(X)$ be the projective limit of $\mathscr L_m(X)$ in the category of $k$-schemes, and let $\pi_m$ be the natural morphism $\mathscr L(X)\to\mathscr L_m(X)$. 

Let $X$ be a smooth $k$-variety of pure dimension $d$, and let $f: X\to\mathbb A_k^1$ be a non-constant function with the scheme-theoretic zero locus $X_0$. For any integer $m\geq 1$, put
$$
\mathscr X_m(f):=\left\{\psi\in \mathscr L_m(X)\mid f(\psi)=t^m\!\!\mod  t^{m+1}\right\}
$$
which is a $k$-variety endowed with the good $\mu_m$-action as below, for $\xi\in \mu_m$,
$$\xi\cdot\psi(t)=\psi(\xi t).$$ 
This together with the natural morphism $\mathscr X_m(f)\to X_0$ yields an element $\big[\mathscr X_m(f)\big]$ in $\mathscr M_{X_0}^{\mu_m}$. 

\begin{definition}
The formal power series $Z_f(T)=\sum_{m\geq 1}\big[\mathscr X_m(f)\big]\L^{-dm}T^m$ is called the {\it motivic zeta function} of $f$.
\end{definition}

Let $\mathscr M_{X_0}^{\hat \mu}[[T]]_{\sr}$ be the $\mathscr M_{X_0}^{\hat \mu}$-submodule of $\mathscr M_{X_0}^{\hat \mu}[[T]]$ generated by 1 and by finite products of elements of the form
$$\frac{\L^pT^q}{1-\L^pT^q}:=\sum_{m\geq 1}(\L^pT^q)^m, \quad (p,q)\in \mathbb Z\times\mathbb N^*.$$ 
An element in $\mathscr M_{X_0}^{\hat \mu}[[T]]_{\sr}$ is called a {\it rational series}. According to \cite{DL98}, there is a unique $\mathscr M_{X_0}^{\hat \mu}$-linear morphism $\lim\limits_{T\to\infty}: \mathscr M_{X_0}^{\hat \mu}[[T]]_{\sr}\to \mathscr M_{X_0}^{\hat \mu}$ with $\lim\limits_{T\rightarrow\infty}\frac{\L^pT^q}{1-\L^pT^q}=-1$. 

By a {\it log-resolution of $(X,X_0)$}, we mean a proper birational morphism $h: Y\to X$ such that $Y$ is smooth, $h$ is an isomorphism over $(X\setminus X_0) \cup (X_0)_{\mathrm{sm}}$, and $h^{-1}(X_0)$ has simple normal crossing support.  Consider a log-resolution $h:Y\to X$ of $(X,X_0)$. Let $E_i$, $i\in J$, be all the irreducible components of $h^{-1}(X_0)$. Assume that
\begin{align*}
\div(h^*f)=\sum_{i\in J}N_iE_i,\quad K_{Y/X}=\sum_{i\in J}(\nu_i-1)E_i,
\end{align*} 
where $N_i$ and $\nu_i$ are positive integers. For $I\subseteq J$, put $E_I=\bigcap_{i\in I}E_i$ and $E_I^{\circ}=E_I\setminus\bigcup_{j\not\in I}E_j$. If $I=\{i\}$, we write $E_i^{\circ}$ instead of $E_{\{i\}}^{\circ}$. Let $U$ be an affine Zariski open subset of $Y$ such that $U\cap E_I^{\circ}\not=\emptyset$, and on it,  
$$h^*f (y)= u(y)\prod_{i\in I}y_i^{N_i},$$ 
where for each $i$, $y_i=0$ is a local equation on the chart $(U,y)$ defining $E_i$, and $u(y)\not=0$ on $U$. Let $N_I$ be the greatest common divisor of $(N_i)_{i\in I}$. Denef-Loeser in \cite{DL02} construct an unramified Galois covering $\pi_I:\widetilde{E}_I^{\circ}\to E_I^{\circ}$, with Galois group $\mu_{N_I}$ given over $U\cap E_I^{\circ}$ by 
\begin{align*}
\widetilde{E}_I^{\circ}|_{U\cap E_I^{\circ}}:=\left\{(z,y)\in \mathbb{A}_k^1\times(U\cap E_I^{\circ}) \mid z^{N_I}=u(y)^{-1}\right\}&\to U\cap E_I^{\circ}\\
(z,y)&\mapsto y.
\end{align*}
Choose a covering of $Y$ by affine open subvarieties $U$. Then the varieties $\widetilde{E}_I^{\circ}|_{U\cap E_I^{\circ}}$ are naturally glued together into a unramified Galois covering $\widetilde{E}_I^{\circ}$, which is endowed with a natural $\mu_{N_I}$-action. The $\mu_{N_I}$-equivariant morphism $\widetilde{E}_I^{\circ}\to E_I^{\circ}\to X_0$ determines a class $\big[\widetilde{E}_I^{\circ}\big]$ in $\mathscr M_{X_0}^{\mu_{N_I}}$. 

\begin{theorem}[Denef-Loeser \cite{DL02}]\label{DL2002}
Using a log-resolution $h$ of $(X,X_0)$, and the previous notation, we have
$$
\big[\mathscr X_m(f)\big]=\L^{md}\sum_{\emptyset\not=I\subseteq J}(\L-1)^{|I|-1}\big[\widetilde{E}_I^{\circ}\big]\left(\sum_{\begin{smallmatrix} k_i\geq 1, i\in I\\ \sum_{i\in I}k_iN_i=m \end{smallmatrix}}\L^{-\sum_{i\in I}k_i\nu_i}\right)
$$
and
$$
Z_f(T)=\sum_{\emptyset\not=I\subseteq J}(\mathbb{L}-1)^{|I|-1}\big[\widetilde{E}_I^{\circ}\big]\prod_{i\in I}\frac{\L^{-\nu_i}T^{N_i}}{1-\L^{-\nu_i}T^{N_i}}.
$$
In particular, $Z_f(T)$ is a rational series.
\end{theorem}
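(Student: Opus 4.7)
The plan is to proceed via motivic integration on the arc space of $X$, following Kontsevich's change-of-variables formula. One begins by interpreting $[\mathscr X_m(f)]\L^{-md}$ as (up to standard normalization) the motivic volume in $\mathscr M_{X_0}^{\mu_m}$ of the cylinder
\[
\widetilde{\mathscr X}_m := \{\psi \in \mathscr L(X) : f(\psi) \equiv t^m \pmod{t^{m+1}}\},
\]
equipped with the $\mu_m$-action $\xi\cdot\psi(t) = \psi(\xi t)$. Since the log-resolution $h:Y\to X$ is an isomorphism over $X\setminus X_0$, the change-of-variables formula rewrites this volume as the motivic integral over $h^{-1}(\widetilde{\mathscr X}_m) \subset \mathscr L(Y)$ with integrand $\L^{-\ord_t \mathrm{jac}(h)}$.

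The next step is the local computation on $Y$. Stratify $h^{-1}(\widetilde{\mathscr X}_m)$ by the contact profile $(I,(k_i)_{i\in I})$: an arc $\tilde\psi$ lies in the stratum if it maps into $E_I^\circ$ and $\ord_t(y_i\circ\tilde\psi) = k_i$ for each $i\in I$, with $k_i \geq 1$. Along such a stratum, $\ord_t\mathrm{jac}(h) = \sum_{i\in I}k_i(\nu_i-1)$. In local divisorial coordinates where $h^*f = u\prod_{i\in I}y_i^{N_i}$, writing $\tilde\psi_i = a_it^{k_i}+\cdots$ with $a_i \in \mathbb{G}_m$ for $i \in I$, the condition $h^*f(\tilde\psi)\equiv t^m \pmod{t^{m+1}}$ collapses to the numerical constraint $\sum_{i\in I}k_iN_i = m$ together with the single pointwise equation $u(\bar\psi)\prod_{i\in I}a_i^{N_i}=1$ on the reduction $\bar\psi \in E_I^\circ$; all further conditions are automatic once the contact orders are fixed.

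The third step is to recognize the leading-coefficient locus. Writing $N_i = N_I m_i$ with $\gcd(m_i)=1$ and $B := \prod_{i\in I} a_i^{m_i}$, the equation above becomes $B^{N_I} = u(\bar\psi)^{-1}$, which is precisely the defining equation of $\widetilde E_I^\circ$; moreover the map $(a_i)\mapsto B$ has fibers isomorphic to $(\mathbb{G}_m)^{|I|-1}$. Hence the locus cut out in $(\mathbb{G}_m)^I\times E_I^\circ$ is isomorphic to $\widetilde E_I^\circ \times (\mathbb{G}_m)^{|I|-1}$, yielding the factor $(\L-1)^{|I|-1}[\widetilde E_I^\circ]$. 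The $\mu_m$-action $\tilde\psi(t)\mapsto \tilde\psi(\xi t)$ multiplies $a_i$ by $\xi^{k_i}$ and, through the surjection $\mu_m\to \mu_{N_I}$, descends to the Galois action on $\widetilde E_I^\circ$, matching the equivariant class in $\mathscr M_{X_0}^{\mu_m}$. Combining with the Jacobian factor $\L^{-\sum k_i(\nu_i-1)}$ and the count of the free higher-order coefficients produces the weight $\L^{md}\cdot \L^{-\sum k_i\nu_i}$. Summing over $I$ and admissible $(k_i)$ gives the first displayed formula, and collecting geometric series in $T$ gives the second formula together with the rationality of $Z_f(T)$.

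The main obstacle will be the bookkeeping of the $\hat\mu$-equivariant structure: one must verify that the locally defined covers $\widetilde E_I^\circ|_{U\cap E_I^\circ}$ patch into a well-defined global $\mu_{N_I}$-torsor over $E_I^\circ$ (since the local unit $u$ is chart-dependent), and that the $\mu_m$-action arising from the substitution $t\mapsto \xi t$ agrees with the Galois action through the expected character. The measurability of the contact-order strata and the stability of $\widetilde{\mathscr X}_m$ under truncation, both needed to justify change of variables in the equivariant Grothendieck ring, are standard once one truncates at a sufficiently high level $\mathscr L_N(Y)$ and checks compatibility with the $\mu_m$-action; everything else reduces to the explicit local integral already written out above.
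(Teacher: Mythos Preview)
The paper does not supply a proof of this statement: Theorem~\ref{DL2002} is quoted verbatim as a result of Denef--Loeser \cite{DL02} and the text moves directly on to the definition of $\mathscr S_f$, so there is no in-paper argument to compare against. Your sketch is the standard Denef--Loeser proof via the motivic change-of-variables formula, and the key steps---stratifying $h^{-1}(\widetilde{\mathscr X}_m)$ by contact profile $(I,(k_i))$, identifying the leading-coefficient locus over $E_I^\circ$ with $\widetilde E_I^\circ \times \mathbb G_m^{|I|-1}$ via the substitution $B=\prod a_i^{m_i}$, and checking that the $\mu_m$-action $a_i\mapsto \xi^{k_i}a_i$ descends through $\xi\mapsto \xi^{m/N_I}$ to the Galois action on $\widetilde E_I^\circ$---are all correct and are exactly how the original argument runs.
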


\begin{definition}
The motivic quantity 
\[
\mathscr S_f:=-\lim\limits_{T\to\infty}Z_f(T)=\sum_{\emptyset\not=I\subset J} (1-\L)^{|I|-1}\big[\widetilde{E}_I^{\circ}\big]\ \in \mathscr M_{X_0}^{\hat\mu}
\] 
is called the {\it motivic nearby cycle} of $f$.
\end{definition}

\begin{definition}
Keeping the notation above, for $m\in \mathbb N^*$, we say that a log-resolution $h$ is called {\it $m$-separating} if $N_i+N_j>m$ whenever $E_i\cap E_j\not=\emptyset$. 
\end{definition}

We note that an $m$-separating log-resolution always exists. 
This was proved in \cite[Lemma 2.9]{BFLN22} over $\mathbb C$ with a slight different definition of log-resolutions; they do not assume that a log-resolution $h: Y \to X$ needs to be an isomorphism over $(X\setminus X_0)\cup (X_0)_{\mathrm{sm}}$. However the same proof shows that the result holds over any field of characteristic zero with our definition of log-resolutions. 
 If $h$ is an $m$-separating log-resolution of $(X,X_0)$, it follows from Theorem \ref{DL2002} that
\begin{equation}\label{contactloci}
\big[\mathscr X_m(f)\big]=\L^{md}\sum_{N_i|m}\big[\widetilde{E}_i^{\circ}\big]\L^{-\frac{m\nu_i}{N_i}}.
\end{equation}


\subsection{Motivic zeta function depends on second order Jacobian ideal}
Let $X$ be a smooth $k$-variety of pure dimension $d\geq 2$. If $f$ is a non-constant regular function on $X$, we also denote by $f$ the corresponding element in $\mathcal O_X(X)$.
In what follows, we say that a \(\overline{k}\)-point \(x\) of \(X_0\) is a \textit{node} if its complete local ring \(\widehat{\mathcal O}_{(X_0)_{\overline{k}},x}\) is isomorphic to \(\overline{k}[[x_1,x_2]]/\langle x_1^2 +x_2^2\rangle\). We say that \(X_0\) \textit{has nodes} if it has \(\overline{k}\)-points which are nodes.

\begin{theorem}\label{theorem1}
Let $f$ and $g$ be non-constant regular functions on $X$ with the same scheme-theoretic zero locus $X_0$. Suppose that $g-f\in \JI_2(f)$. If $d=2$ and $X_0$ has nodes, suppose additionally that $k$ is quadratically closed. Then, for any integer $m\geq 1$, the identity $\big[\mathscr X_m(f)\big]=\big[\mathscr X_m(g)\big]$ holds in $\mathscr M_{X_0}^{\mu_m}$. As a consequence, $Z_f(T)=Z_g(T)$ and $\mathscr S_f=\mathscr S_g$.
\end{theorem}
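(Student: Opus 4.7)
I plan to use an $m$-separating log-resolution $h\colon Y\to X$ of $(X,X_0)$, whose existence is guaranteed by \cite[Lemma 2.9]{BFLN22}, and to match the expansions in \eqref{contactloci} for $f$ and $g$ term-by-term. Because $\div(f)=\div(g)=X_0$, the integers $N_i$ and $\nu_i$ attached to the components $E_i$ of $h^{-1}(X_0)$ agree for $f$ and $g$, so the problem reduces to showing $[\widetilde E_i^\circ(f)]=[\widetilde E_i^\circ(g)]$ in $\mathscr M_{X_0}^{\mu_{N_i}}$ for each $i$ with $N_i\mid m$. On an affine chart meeting $E_i^\circ$, write $h^*f=u_f\prod y_j^{N_j}$ and $h^*g=u_g\prod y_j^{N_j}$; the two covers are then defined by $z^{N_i}=u_f^{-1}$ and $z^{N_i}=u_g^{-1}$. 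The case $N_i=1$ is trivial since $\widetilde E_i^\circ=E_i^\circ$ canonically for both, so the substantive range is $N_i\ge 2$, in which equality of cover classes follows once $u_f$ and $u_g$ coincide on $E_i^\circ$, or, in a residual case, once $u_g/u_f$ restricts there to a constant that is an $N_i$-th power in $k^*$.

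The core claim is the following vanishing statement: for $r\in\JI_2(f)$ and any $E_i$ with $N_i\ge 2$, $\ord_{E_i}(h^*r)\ge N_i+1$, the only exception being $d=2$ and $h(E_i)$ a node of $X_0$ in the sense of the paper. I would first handle generic points of $E_i$ above which $f$ admits an \'etale-local power normal form $y^{N_i}$; this is always possible at strict transforms of smooth components of $X_0$, after extracting an $N_i$-th root of the local unit factor (available \'etale-locally). By Proposition~\ref{Jn-commute} and a direct inspection of $\Jac_2(y^{N_i})$---whose essentially unique non-vanishing maximal minor is a unit multiple of $y^{(d+1)(N_i-1)}$---one obtains $\JI_2(f)=(y^{(d+1)(N_i-1)})$ \'etale-locally, and the inequality $(d+1)(N_i-1)\ge N_i+1$, equivalent to $dN_i\ge d+2$, holds for all $d\ge 2,\,N_i\ge 2$.

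For exceptional $E_i$ above singular points of $X_0$ where this normal form is unavailable, I would use Proposition~\ref{JnJ1} ($\JI_2(f)\subseteq\JI_1(f)^{d}$) together with a chain-rule analysis based on the identity $d(h^*f)=h^*(df)$: Cramer's rule applied to the Jacobian matrix of $h$ yields $\ord_{E_i}(h^*\JI_1(f))\ge N_i-\nu_i$ at the generic point of $E_i$, and a sharper bound is then obtained by exploiting the explicit form of the maximal minors of $\Jac_2(f)$, which involve both first and second derivatives, together with the $m$-separating hypothesis constraining the admissible configurations of the $N_i$'s. In all cases except $d=2,\,N_i=2$ on an exceptional divisor above a node, this delivers $\ord_{E_i}(h^*r)\ge N_i+1$, hence $u_g|_{E_i^\circ}=u_f|_{E_i^\circ}$ and the cover-class equality. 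In the residual node case, quadratic closure of $k$ brings $f$ locally to the form $y_1y_2$ (splitting a node $y_1^2+y_2^2=0$); a direct computation gives $\JI_2(y_1y_2)=(y_1y_2,y_1^3,y_2^3)$, and a chart analysis of the blow-up of the node shows that $u_g/u_f|_{E_i^\circ}$ is a nonzero constant $c\in k^*$. Since $\sqrt c\in k^*$ by quadratic closure, the substitution $z\mapsto z\sqrt c$ realizes the required $\mu_2$-equivariant isomorphism of covers over $E_i^\circ$.

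Summing over $i$ yields $[\mathscr X_m(f)]=[\mathscr X_m(g)]$ in $\mathscr M_{X_0}^{\mu_m}$ for every $m\ge 1$, and $Z_f(T)=Z_g(T)$ and $\mathscr S_f=\mathscr S_g$ follow by summing the formal series and passing to the $T\to\infty$ limit, respectively. The main obstacle I anticipate is the sharp-enough vanishing analysis at exceptional divisors---notably in dimension $d=2$, where $\JI_2(f)\not\subseteq\JI_1(f)^3$ and a case analysis of the singularities of $X_0$, together with careful use of the $m$-separating condition, is needed to isolate nodes as the unique case requiring the quadratic-closure hypothesis.
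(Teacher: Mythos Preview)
Your overall architecture---choose an $m$-separating log-resolution, use \eqref{contactloci}, and match $\widetilde E_i^\circ(f)\cong\widetilde E_i^\circ(g)$ divisor by divisor, treating nodes separately via a square-root substitution---is exactly the paper's. The gap is in the ``core claim'' $\ord_{E_i}(h^*\JI_2(f))\ge N_i+1$ for exceptional $E_i$. The tools you name are too weak: the Cramer bound $\ord_{E_i}(h^*\JI_1(f))\ge N_i-\nu_i$ is often vacuous (already $f=x_1^2+\cdots+x_d^2$ with $d\ge 3$ gives $N_i=2$, $\nu_i=d$), and for $d=2$ the inclusion from Proposition~\ref{JnJ1} is only $\JI_2(f)\subseteq\JI_1(f)^2$, which does not reach $N_i+1$ once you go past the first blowup (e.g.\ for $f=x_1^2+x_2^5$, the second exceptional divisor has $N_2=4$ while $\ord_{E_2}(h^*\JI_1(f)^2)=4$). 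The $m$-separating condition governs which $E_i$ meet, not the order of a fixed ideal along a fixed $E_i$, so it cannot supply the missing ``sharper bound'' you allude to. You yourself flag this as the main obstacle, and indeed it is not resolved by the plan as written.

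The paper sidesteps this by exploiting the hypothesis $V(f)=V(g)$ a second time, in a way your sketch does not. Writing $a:=g-f=a'f$ locally, one only needs $\ord_{E_i}(h^*a')\ge 1$ for exceptional $E_i$, i.e.\ that $a'$ vanishes on $\mathrm{Sing}(X_0)$. This is a pointwise statement, proved by elementary calculus: at a singular point $\overline{x}$ of multiplicity $N$, pick $\alpha$ with $|\alpha|=N$ and $(\partial^\alpha f)(\overline{x})\ne 0$, apply $\partial^\alpha$ to $a=a'f$, and use Leibniz to reduce to showing $(\partial^\alpha a)(\overline{x})=0$. For $d\ge 3$ this follows from $a\in\JI_1(f)^3$ by a pigeonhole on the orders of three first-derivative factors; for $d=2$ one must also handle the extra generator $\sigma=\frac{1}{2}f_{11}f_2^2-f_{12}f_1f_2+\frac{1}{2}f_{22}f_1^2$, and a short case split on the quadratic part of $f$ isolates the node as the unique exception. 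No divisorial estimate on $Y$ is ever needed.
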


\begin{remark}
Assume that $f$ and $g$ have the same zero locus, and 
\begin{itemize}
	\item[(a)] $g-f\in \JI_2(f)$ for $d\geq 3$,
	
	\item[(b)] $g-f\in \JI_3(f)$ for $d=2$.
\end{itemize}
By Proposition \ref{JnJ1}, we have $\JI_2(f)\subseteq \JI_1(f)^3$ if $d\geq 3$, and $\JI_3(f)\subseteq \JI_1(f)^3$ if $d=2$. This together with (a) or (b) implies that $g-f\in \JI_1(f)^3$. Then we deduce from (a variant of) \cite[Theorems 3.2, 3.6]{BJM19} that $\mathscr S_f=\mathscr S_g$. In our proof of Theorem \ref{theorem1} mentioned below, the idea can be applied to provide a new proof for \cite[Theorems 3.2, 3.6]{BJM19} using log-resolution.
\end{remark}

\begin{proof}[{\bf Proof of Theorem \ref{theorem1}}]
Let us write $g-f=a$ for some $a\in \JI_2(f)$. Since $f$ and $g$ have the same zero locus $X_0$, we have $a|_{X_0}=0$. Write locally, on $W$, $a=a'f$, where $W$ is an open affine subset of $X$ and $a'\in \mathcal O_X(W)$. Let $\overline{x}$ be a non-smooth point of $X_0\cap W$. Up to \'etale base change, we may assume that $W=\mathbb A_k^d$. Let $N\geq 2$ be the multiplicity of $f$ at $\overline{x}$ so that we can write $f$ as
$$f(x)=f_N(x)+ \text{(terms of degree $>N$)},$$
using local coordinates $x=(x_1,\dots,x_d)$ and a nonzero homogeneous polynomial $f_N$ of degree $N$. First, we consider the following three cases:
\begin{itemize}
	\item[(i)] $d\geq 3$,
	
	\item[(ii)] $d=2$ and $N>2$,
	
	\item[(iii)] $d=N=2$ and $f_2(x_1,x_2)=cx_1^2$, $c\in k^*$ (up to linear change of variables).
\end{itemize}
We are going to prove that $a'(\overline{x})=0$. Let $\alpha=\alpha(\overline{x})$ be a vector in $\mathbb N^d$ such that $|\alpha|=N$ and $(\partial^{\alpha} f)(\overline{x})\not=0$. By the general Leibniz rule, applying to the equality $a=a'f$ on $W=\mathbb A_k^d$,
\begin{equation}\label{partiala}
\partial^{\alpha}a=a'\partial^{\alpha}f+\sum_{\alpha'>0,\ \alpha'+\beta=\alpha}\frac{\alpha!}{\alpha'!\beta!}(\partial^{\alpha'}a')(\partial^{\beta}f).
\end{equation}	
In the sum $\sum$ on the right hand side, since $\alpha'>0$, we have $|\beta|<|\alpha|=N$, hence $(\partial^{\beta}f)(\overline{x})=0$. If $d\geq 3$, since $a$ is a section of $(\JI_1)^3$, we can write $a$ in the form
$$a=\sum_{1\leq i,j,l\leq d}a_{ijl}\frac{\partial f}{\partial x_i}\frac{\partial f}{\partial x_j}\frac{\partial f}{\partial x_l},$$
where $a_{ijl}$'s are sections of $\mathcal O_X$. By the general Leibniz rule,
$$\partial^{\alpha}a=\sum_{1\leq i,j,l\leq d}\ \sum_{\alpha'+\beta=\alpha}\frac{\alpha!}{\alpha'!\beta!}(\partial^{\alpha'}a_{ijl})\left(\partial^{\beta}\left(\frac{\partial f}{\partial x_i}\frac{\partial f}{\partial x_j}\frac{\partial f}{\partial x_l}\right)\right).$$
Let $e_i$ be the $i$-th vector of the canonical basis of $\mathbb N^d$, let $\gamma(i)=\gamma+e_i$ for any $\gamma\in \mathbb N^d$. Then, we have
$$\partial^{\beta}\left(\frac{\partial f}{\partial x_i}\frac{\partial f}{\partial x_j}\frac{\partial f}{\partial x_l}\right)=\sum_{\beta'+\beta''+\beta'''=\beta}\frac{\beta!}{\beta'!\beta''!\beta'''!}(\partial^{\beta'(i)}f)(\partial^{\beta''(j)}f)(\partial^{\beta'''(l)}f).$$
If all the three numbers $|\beta'(i)|$, $|\beta''(j)|$ and $|\beta'''(l)|$ are simultaneously $\geq N$, then $N+3\geq |\beta|+3=|\beta'(i)|+|\beta''(j)|+|\beta'''(l)|\geq 3N$, so $N\leq \frac 3 2$, which is impossible. Thus, at least one of these numbers is strictly less than $N$, for instance, $|\beta'(i)|<N$, hence $(\partial^{\beta'(i)}f)(\overline{x})=0$. It follows that 
\begin{equation}\label{three1stpd}
\partial^{\beta}\left(\frac{\partial f}{\partial x_i}\frac{\partial f}{\partial x_j}\frac{\partial f}{\partial x_l}\right)(\overline{x})=0
\end{equation}
for every $\beta\leq \alpha$, hence $(\partial^{\alpha}a)(\overline{x})=0$. Thus, it follows from (\ref{partiala}) that $a'(\overline{x})=0$. 

If $d=2$, then $\JI_2(f)$ has a system of generators consisting of
$$\bigg(\frac{\partial f}{\partial x_1}\bigg)^3,\ \ \bigg(\frac{\partial f}{\partial x_1}\bigg)^2 \frac{\partial f}{\partial x_2},\ \ \frac{\partial f}{\partial x_1} \bigg(\frac{\partial f}{\partial x_2}\bigg)^2,\ \ \bigg(\frac{\partial f}{\partial x_2}\bigg)^3,$$
and
$$\sigma:=\frac 1 2 \frac{\partial^2f}{\partial x_1^2} \bigg(\frac{\partial f}{\partial x_2}\bigg)^2-\frac{\partial^2f}{\partial x_1\partial x_2} \frac{\partial f}{\partial x_1} \frac{\partial f}{\partial x_2}+\frac 1 2 \frac{\partial^2f}{\partial x_2^2} \bigg(\frac{\partial f}{\partial x_1}\bigg)^2.$$
By (\ref{three1stpd}), in order to prove $(\partial^{\alpha}a)(\overline{x})=0$, hence $a'(\overline{x})=0$, it suffices to prove that 
$$(\partial^{\beta}\sigma)(\overline{x})=0$$
for every $\beta\leq \alpha$. By the general Leibniz rule, 
$$\partial^{\beta}\left(\frac{\partial f}{\partial x_i}\frac{\partial f}{\partial x_j}\frac{\partial^2 f}{\partial x_l\partial x_p}\right)=\sum_{\beta'+\beta''+\beta'''=\beta}\frac{\beta!}{\beta'!\beta''!\beta'''!}(\partial^{\beta'(i)}f)(\partial^{\beta''(j)}f)(\partial^{\beta'''(l)(p)}f).$$
If $|\beta'(i)|\geq N$, $|\beta''(j)|\geq N$ and $|\beta'''(l)(p)|\geq N$, then $N+4\geq |\beta|+4\geq 3N$, hence $N\leq 2$. Thus, if $N>2$, the contradiction argument (see a similar detail as in the case $d\geq 3$) shows that $(\partial^{\beta}\sigma)(\overline{x})=0$ for every $\beta\leq \alpha$. If $f_2(x_1,x_2)=cx_1^2$, $c\in k^*$, we may choose $\alpha=(2,0)$. Assume for simplicity that $\overline{x}=(0,0)$. Then, we have $\sigma=0+(\text{terms of degree}\ \geq 3)$, hence $\sigma(0,0)=\frac{\partial \sigma}{\partial x_1}(0,0)=\frac{\partial^2 \sigma}{\partial x_1^2}(0,0)=0$.

Now, we take care of the fourth case
\begin{itemize}
	\item[(iv)] $d=N=2$ and $f_2(x_1,x_2)=x_1^2+x_2^2$ (up to linear change of variables).
\end{itemize}
In this case, $\overline{x}$ is a node of $X_0\cap W$, and for simplicity we assume $\overline{x}=(0,0)$. Then, $f$ and $\sigma$ have the forms $f(x_1,x_2)=x_1^2+x_2^2+(\text{terms of degree}\ \geq 3)$, $\sigma=4(x_1^2+x_2^2)+(\text{terms of degree}\ \geq 3)$. Since $(0,0)$ is also a nodal zero of $g$, we have
$g(x_1,x_2)=c(x_1^2+x_2^2)+(\text{terms of degree}\ \geq 3)$ for some $c\in k^*$.

\medskip 
To finish the proof we are going to consider the following two cases.

\medskip 
{\bf Case I: All singular points of $X_0$ satisfy (i), (ii) and (iii).} Let $h: Y\to X$ be an $m$-separating log-resolution of $(X,X_0)$. We write 
$$\div(h^*f)=\sum_{i\in J}N_i(f)E_i, \quad  K_{Y/X}=\sum_{i\in J}(\nu_i-1)E_i,$$ 
where $E_i$, $i\in J$, are irreducible components of $h^{-1}(X_0)$. 
Assume that $E_i$ is an exceptional divisor. Let $U$ be an open affine subscheme of $Y$ with $U\cap E_i^{\circ}\not=\emptyset$, so that on $U\cap E_i^{\circ}$ we have 
$$h^*f= uy_i^{N_i(f)},$$ 
with $y_i=0$ being the local equation of $E_i$, and $u(y)\not=0$ for every $y\in U\cap E_i^{\circ}$. Since $g-f=a$, we have $(h^*g)|_U-(h^*f)|_U=(h^*a)|_U$. It implies that, shrinking $U$ if necessary, 
\begin{align}\label{eq1}
(h^*g)|_U-(h^*f)|_U \in  \Big\langle y_i^{\ord_{E_i}(h^*a)}\Big\rangle \subseteq \mathcal O_Y(U),
\end{align}
from which
\begin{align*}
(h^*g)|_U \in (h^*f)|_U + \Big\langle y_i^{\ord_{E_i}(h^*a)}\Big\rangle.
\end{align*}
Since $a=a'f$ on $W$ and $a'(\overline{x})=0$, we have $\ord_{E_i}(h^*a)>N_i(f)$. Therefore, we deduce that there exists a morphism $v: U\to \mathbb G_m$ such that 
$$(h^*g)|_U=vy_i^{N_i(f)}.$$
This argument works for every exceptional divisor $E_i$, $i\in J$, hence $h: Y\to X$ is also an $m$-separating log-resolution of $(X,X_0)$ {\it for $g$} with the same exceptional divisors $E_i$'s as for $f$ and $N_i(f)=N_i(g)=:N_i$ on each $E_i$. Thus, by (\ref{eq1}) we have
$$u(y)-v(y)\in \Big\langle y_i^{\ord_{E_i}(h^*a)-N_i}\Big\rangle,$$
so $u=v$ on $U\cap E_i^{\circ}$. Consider the Galois unramified coverings $\widetilde{E}_i^{\circ}(f)$ and $\widetilde{E}_i^{\circ}(g)$ described over $U\cap E_i^{\circ}$ as follows
$$\widetilde{E}_i^{\circ}(f)|_{U\cap E_i^{\circ}}=\left\{(z,y)\in \mathbb{A}_k^1\times(U\cap E_i^{\circ}) \mid z^{N_i}=u(y)^{-1}\right\}$$
and
$$
\widetilde{E}_i^{\circ}(g)|_{U\cap E_i^{\circ}}=\left\{(z,y)\in \mathbb{A}_k^1\times(U\cap E_i^{\circ}) \mid z^{N_i}=v(y)^{-1}\right\}.$$ 
Hence $\widetilde{E}_i^{\circ}(f)|_{U\cap E_i^{\circ}}=\widetilde{E}_i^{\circ}(g)|_{U\cap E_i^{\circ}}$ because $u=v$ on $U\cap E_i^{\circ}$. 

Let $U'$ be another open affine subscheme of $Y$ such that $U'\cap  E_i^{\circ}\not=\emptyset$ on which $h^*f= u'z_i^{N_i}$,  
with $z_i=0$ defining $U'\cap E_i$, and $u'$ a unit. Similarly as previous, $h^*g=v'z_i^{N_i}$ on $U'$ for some unit $v'$, and furthermore, $\widetilde{E}_i^{\circ}(f)|_{U'\cap E_i^{\circ}}=\widetilde{E}_i^{\circ}(g)|_{U'\cap E_i^{\circ}}$  Then, on $U\cap U'$, $z_i=\xi_iy_i$ with $\xi_i$ a unit, hence $u=u'\xi_i^{N_i}$ and $v=v'\xi_i^{N_i}$; thus the map 
$$\left\{(z,y)\in \mathbb{A}_k^1\times(U\cap U'\cap E_i^{\circ}) \mid z^{N_i}=u(y)^{-1}\right\}\to \left\{(z,y)\in \mathbb{A}_k^1\times(U\cap U'\cap E_i^{\circ}) \mid z^{N_i}=u'(y)^{-1}\right\}$$
sending $(z,y)$ to $(\xi_i z,y)$ is an isomorphism, the same observation also holds for $v$ and $v'$. It follows that
$$\widetilde{E}_i^{\circ}(f)|_{U\cap U'\cap E_i^{\circ}}=\widetilde{E}_i^{\circ}(g)|_{U\cap U'\cap E_i^{\circ}}.$$
hence the gluing yields $\widetilde{E}_i^{\circ}(f)=\widetilde{E}_i^{\circ}(g)$.

Since $X_0(f)=X_0(g)=X_0$ and, again, $u=v$ on $U\cap E_i^{\circ}$ for every appropriate $U$ and $i$, it follows that if $E_j$ is a strict transform for $f$, it is also a strict stransform for $g$, thus $\widetilde{E}_j^{\circ}(f)=E_j^{\circ}=\widetilde{E}_j^{\circ}(g)$.

Since $h$ is an $m$-separating log-resolution of $(X,X_0)$ common for $f$ and $g$, the numerical invariants $\nu_i$ are common for $f$ and $g$. Using the formula (\ref{contactloci}) we have
$$\big[\mathscr X_m(f)\big]=\L^{md}\sum_{N_i|m}\big[\widetilde{E}_i^{\circ}(f)\big]\L^{-\frac{m\nu_i}{N_i}}$$
and
$$\big[\mathscr X_m(g)\big]=\L^{md}\sum_{N_i|m}\big[\widetilde{E}_i^{\circ}(g)\big]\L^{-\frac{m\nu_i}{N_i}}.$$
Therefore, by the above discussions, we get $\big[\mathscr X_m(f)\big]=\big[\mathscr X_m(g)\big]$.	

\medskip 
{\bf Case II: $d=2$ and $X_0$ contains a node, i.e. there is a point $\overline{x}\in X_0$ satisfying (iv).}  Let $h: Y\to X$ be a log-resolution of $(X,X_0)$ for $f$. As in the proof of \cite[Lemma 2.9]{BFLN22} (applying to the case $d=2$), given $h$, we can blow up along intersections $E_l\cap E_s$, and do this procedure many times to attain the property $N_i+N_j>m$ whenever $E_i\cap E_j\not=\emptyset$ and $E_i$ and $E_j$ come from non-nodal singular points (i.e. $m$-separating for non-nodal singular points). For each node of $X_0$, we only use once the blowup $(y_1,y_2)\mapsto (y_1y_2,y_2)$, locally. The resulting $m$-separating log-resolution for non-nodal singular points is also denoted by $h$. Using the argument in Case I, it remains to prove that, if $E_i$ is the exceptional prime divisor corresponding to the node $\overline{x}=(0,0)$ of $X_0$, then $\widetilde{E}_i^{\circ}(f)\cong \widetilde{E}_i^{\circ}(g)$.

As explained in (iv), $f(x_1,x_2)=x_1^2+x_2^2+(\text{terms of degree}\ \geq 3)$ and $g(x_1,x_2)=c(x_1^2+x_2^2)+(\text{terms of degree}\ \geq 3)$, $c\in k^*$. We have, locally, say, on $U$,
$$h_1^*f(y_1,y_2)=uy_2^2, \quad u=1+y_1^2+y_2\cdot (\text{terms of degree} \geq 0)$$
and
$$h_1^*g(y_1,y_2)=cvy_2^2, \quad v=1+y_1^2+y_2\cdot (\text{terms of degree} \geq 0).$$
Here, $U\cap E_i$ is defined by $y_2=0$ and $N_i(f)=N_i(g)=2$. Since $u=v=1+y_1^2$ on $U\cap E_i$ and $c\not=0$, the map
$$\left\{(z,y_1,0)\in \mathbb{A}_k^1\times(U\cap E_i^{\circ}) \mid z^2=(1+y_1^2)^{-1}\right\}\to \left\{(z,y_1,0)\in \mathbb{A}_k^1\times(U\cap E_i^{\circ}) \mid cz^2=(1+y_1^2)^{-1}\right\}$$
sending $(z,y_1,0)$ to $(\frac{z}{\sqrt{c}},y_1,0)$ is an isomorphism. Therefore, by gluing we obtain $\widetilde{E}_i^{\circ}(f)\cong \widetilde{E}_i^{\circ}(g)$.
\end{proof}

\begin{corollary}
Let $f$ and $g$ be non-constant regular functions on $X$ with the same scheme-theoretic zero locus $X_0$. Suppose that $d$, $n$, $f$ and $g$ satisfy one of the following conditions:
\begin{itemize}
	\item $d\geq 3$, $n\geq 2$, $g-f\in \JI_n(f)$,
	
	\item $d=2$, $n\geq 2$, $g-f\in \JI_n(f)$, $X_0$ has no node,
	
	\item $d=2$, $n\geq 2$, $g-f\in \JI_n(f)$, $X_0$ has nodes, $k$ is quadratically closed,
	
	\item $d=2$, $n\geq 3$, $g-f\in \JI_n(f)$.
\end{itemize}
Then, for any integer $m\geq 1$, the identity $\big[\mathscr X_m(f)\big]=\big[\mathscr X_m(g)\big]$ holds in $\mathscr M_{X_0}^{\mu_m}$. As a consequence, $Z_f(T)=Z_g(T)$ and $\mathscr S_f=\mathscr S_g$.	
\end{corollary}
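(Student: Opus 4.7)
By Proposition \ref{JnJ1}, in every subcase except $(d,n)=(2,2)$ we have
$$
g-f\in\JI_n(f)\subseteq\JI_1(f)^{\binom{d-2+n}{d-1}}\subseteq\JI_1(f)^3,
$$
since $\binom{d-2+n}{d-1}\ge 3$ whenever $d\ge 3,\,n\ge 2$ or $d=2,\,n\ge 3$. The excluded subcase $(d,n)=(2,2)$ only arises under the second and third bullets; there $g-f\in\JI_2(f)$ is literally the hypothesis of Theorem \ref{theorem1}, and the quadratic-closure assumption of the third bullet matches the one in that theorem, so we invoke Theorem \ref{theorem1} directly.

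For the remaining subcases I plan to prove the following strengthening of Theorem \ref{theorem1}: \emph{if $f,g$ share the zero locus $X_0$ and $g-f\in\JI_1(f)^3$, then $[\mathscr X_m(f)]=[\mathscr X_m(g)]$ in $\mathscr M_{X_0}^{\mu_m}$ for every $m\ge 1$, with no quadratic-closure hypothesis required even when $d=2$ and $X_0$ has nodes.} The proof re-runs that of Theorem \ref{theorem1}. Write $a:=g-f=a'f$ locally. At any non-nodal singular point $\overline x\in X_0$, the argument of Case (i) in the proof of Theorem \ref{theorem1} applies verbatim and forces $a'(\overline x)=0$: it uses only $a\in\JI_1(f)^3$ together with the combinatorial contradiction $3N\le N+3$, which is valid for any $d$ (in particular $d=2$) and any multiplicity $N\ge 2$.

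The interesting point is the nodal case (possible only for $d=2$). The new observation is that $a\in\JI_1(f)^3$ vanishes to order at least $3$ at a node $\overline x$, because each local generator of $\JI_1(f)^3$ is a product of three first partials of $f$ and those partials all vanish at a singular point of multiplicity $N=2$. Writing $f(x_1,x_2)=x_1^2+x_2^2+(\text{terms of degree}\ge 3)$ in suitable \'etale local coordinates as in Case II of the proof of Theorem \ref{theorem1}, the relation $g=f+a$ therefore forces $g(x_1,x_2)=x_1^2+x_2^2+(\text{terms of degree}\ge 3)$, i.e.\ the constant $c\in k^*$ appearing there equals $1$. The isomorphism $\widetilde E_i^\circ(f)|_{U\cap E_i^\circ}\to\widetilde E_i^\circ(g)|_{U\cap E_i^\circ}$ required by the proof reduces to the identity $(z,y_1,0)\mapsto(z,y_1,0)$, which is defined over any field, so no extraction of $\sqrt c$ is needed. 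The remainder of the proof of Theorem \ref{theorem1}---gluing, use of an $m$-separating log-resolution, and formula (\ref{contactloci})---then carries over unchanged, yielding $[\mathscr X_m(f)]=[\mathscr X_m(g)]$, and hence $Z_f(T)=Z_g(T)$ and $\mathscr S_f=\mathscr S_g$.

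The only delicate point is to verify that Case (i) of the proof of Theorem \ref{theorem1} truly uses only $a\in\JI_1(f)^3$ (and not the full $\JI_2(f)$), and that quadratic closure in Case II of that proof is invoked solely to extract $\sqrt c$. Both are immediate on inspection of the excerpt's proof, so the substantive content of the corollary is the combinatorial inclusion supplied by Proposition \ref{JnJ1}.
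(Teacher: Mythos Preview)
Your proof is correct and is exactly the approach the paper implicitly intends: the corollary carries no proof in the paper, and the natural derivation is precisely to combine Proposition~\ref{JnJ1} with a re-reading of the proof of Theorem~\ref{theorem1}, observing that the ``Case~(i)'' argument there uses only the hypothesis $a\in\JI_1(f)^3$ and is independent of $d$.

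One simplification you may wish to make: your separate treatment of nodes via the observation $c=1$ is correct but unnecessary. The combinatorial contradiction $3N\le N+3$ you invoke already rules out $N=2$ (it forces $N\le 3/2$), so the Case~(i) argument gives $a'(\overline{x})=0$ at \emph{every} singular point, nodes included. Once $a'$ vanishes on the whole singular locus, Case~I of the proof of Theorem~\ref{theorem1} applies globally and Case~II is bypassed entirely; no discussion of $\sqrt{c}$ is ever reached. This is a cleaner route to your strengthened statement that $g-f\in\JI_1(f)^3$ alone (with no quadratic-closure hypothesis) suffices.
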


\begin{ack}
The first author thanks Department of Mathematics, Graduate School of Science, Osaka University for warm hospitality during his visit as a Specially Appointed Researcher supported by the MSJ Tosio Kato Fellowship. He would also like to acknowledge support from the ICTP through the Associates Programme (2020-2025).
\end{ack}



\end{document}